\documentclass[12pt]{amsart}
\usepackage{graphicx}
\usepackage[headings]{fullpage}
\usepackage{amssymb,epic,eepic,epsfig,amsbsy,amsmath,amscd}
\numberwithin{equation}{section}
                        \textwidth16cm
                        \textheight23cm
                        \topmargin-1cm
                        \oddsidemargin 0.2cm
                        \evensidemargin 0.2cm
                        \theoremstyle{plain}
\usepackage{mathrsfs}

\newcommand\no[1]{}

\newtheorem{theorem}{Theorem}[section]
\newtheorem{thm}{Theorem}
\newtheorem{lemma}[theorem]{Lemma}

\newtheorem{proposition}[theorem]{Proposition}

\theoremstyle{definition}
\newtheorem{remark}[theorem]{Remark}

\def\BC{\mathbb C}

\def\BZ{\mathbb Z}

\def\la{\langle}
\def\ra{\rangle}

\DeclareMathOperator{\tr}{\mathrm tr}

\def\be { \begin{equation} }
\def\ee { \end{equation} }

\begin{document}

\title[T.A.P. of genus one two-bridge knots]{Twisted Alexander polynomials of genus one two-bridge knots}

\author[Anh T. Tran]{Anh T. Tran}
\address{Department of Mathematical Sciences, University of Texas at Dallas, Richardson, TX 75080, USA}
\email{att140830@utdallas.edu}

\begin{abstract}
 Morifuji \cite{Mo-08} computed the twisted Alexander polynomial of twist knots for nonabelian representations. In this paper we compute the twisted Alexander polynomial and the Reidemeister torsion of genus one two-bridge knots, a class of knots which includes twist knots. As an application, we give a formula for the Reidemeister torsion of the 3-manifold obtained by a Dehn surgery on a genus one two-bridge knot.
\end{abstract}

\thanks{2010 {\em Mathematics Classification:} Primary 57N10. Secondary 57M25.\\
{\em Key words and phrases: Dehn surgery, nonabelian representation, Reidemeister torsion, twist knot.}}

\maketitle

\section{Introduction}

The twisted Alexander polynomial, a generalization of the
Alexander polynomial, was introduced by Lin \cite{Li} for knots in $S^3$ and by Wada \cite{Wada94-1} for
finitely presented groups. It was interpreted in terms of Reidemeister torsion by
Kitano \cite{Ki} and Kirk-Livingston \cite{KL}. Twisted Alexander polynomials have been extensively studied in the past ten years by many authors, see the survey papers \cite{FV, Mo-15} and references therein. 

In \cite{Mo-08} Morifuji  computed the twisted Alexander polynomial of twist knots for nonabelian representations. In this paper we will generalize his result to genus one two-bridge knots. In a related direction, Kitano \cite{Ki2015} gave a formula for the Reidemeister torsion of the 3-manifold obtained by a Dehn surgery on the figure eight knot. In \cite{Tr} we generalized his result to twist knots. In this paper we will also compute the Reidemeister torsion of the 3-manifold obtained by a Dehn surgery on a genus one two-bridge knot. 

Let $J(k,l)$ be the link in Figure 1, where $k,l$ denote 
the numbers of half twists in the boxes. Positive (resp. negative) numbers correspond 
to right-handed (resp. left-handed) twists. 
Note that $J(k,l)$ is a knot if and only if $kl$ is even. It is known that the set of all genus one two-bridge knots  is the same as the set of all the knots $J(2m,2n)$ with $mn \not= 0$, see e.g. \cite{BZ}. The knots $J(2,2n)$ are known as twist knots. For more information on $J(k,l)$, see \cite{HS}.

\begin{figure}[th]
\centerline{\psfig{file=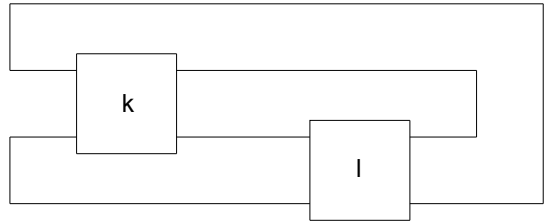,width=3.5in}}
\vspace*{8pt}
\caption{The link $J(k,l)$. }
\end{figure} 

From now on we fix $K=J(2m,2n)$ with $mn \not=0$. The knot group of $K$ has a presentation $\pi_1(K)= \la a, b \mid w^na=bw^n \ra$ where $a,b$ are meridians and $w=(ba^{-1})^m(b^{-1}a)^m$. A representation $\rho:  \pi_1(K) \to SL_2(\BC)$ is called nonabelian if 
the image of $\rho$ is a nonabelian subgroup of $SL_2(\BC)$. Suppose $\rho: \pi_1(K) \to SL_2(\BC)$ is a nonabelian representation. Up to conjugation, we may assume that $$\rho(a) = \left[ \begin{array}{cc}
s & 1 \\
0 & s^{-1} \end{array} \right] \quad \text{and} \quad \rho(b) = \left[ \begin{array}{cc}
s & 0 \\
2-y & s^{-1} \end{array} \right]$$ where $s \not=0$ and $y \not= 2$ satisfy the Riley equation $\phi_K(s,y)=0$, see \cite{Ri, Le}. The polynomial $\phi_K(s,y)$ will be computed explicitly in Section \ref{nab}. Note that $y=\tr \rho(ab^{-1})$. 

Let $S_k(v)$ be the Chebychev polynomials of the second kind defined by $S_0(v)=1$, $S_1(v)=v$ and $S_{k}(v) = v S_{k-1}(v) - S_{k-2}(v)$ for all integers $k$. 

Let $x :=\tr \rho(a)=s + s^{-1}$ and $z:= \tr \rho(w)=2+(y-2)(y+2-x^2)S^2_{m-1}(y)$. 

\begin{thm}
\label{main} 
Suppose $\rho: \pi_1(K) \to SL_2(\BC)$ is a nonabelian representation. Then the twisted Alexander polynomial of $K$ is given by 
\begin{eqnarray*}
\Delta_{K,\rho}(t) &=& (t+t^{-1}-x) \left( \frac{S_{m}(y)-S_{m-2}(y)-2}{y-2} \right) \left( \frac{S_{n}(z)-S_{n-2}(z)-2}{z-2} \right) \\
&& + \,  xS_{m-1}(y) S_{n-1}(z).
\end{eqnarray*}
\end{thm}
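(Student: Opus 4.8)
The plan is to apply Wada's formula for the twisted Alexander polynomial of the two-generator one-relator presentation $\pi_1(K)=\la a,b\mid r\ra$ with relator $r=w^naw^{-n}b^{-1}$ (coming from $w^na=bw^n$), so that
$$\Delta_{K,\rho}(t)=\frac{\det\Phi\!\left(\tfrac{\partial r}{\partial a}\right)}{\det\Phi(b-1)},$$
where $\Phi$ is the ring homomorphism $g\mapsto t^{\alpha(g)}\rho(g)$ on the group ring and $\alpha$ is the abelianization with $\alpha(a)=\alpha(b)=1$. First I would record the easy pieces. Since $\alpha(ba^{-1})=\alpha(b^{-1}a)=0$ we get $\alpha(w)=0$, and since $\tr\rho(b)=x$ with $\det\rho(b)=1$, the denominator is $\det(t\rho(b)-I)=t^2-xt+1=t(t+t^{-1}-x)$.

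For the numerator I would unfold the Fox derivative. Using $\partial(w^{-n})/\partial a=-w^{-n}\,\partial(w^n)/\partial a$ and $\partial(b^{-1})/\partial a=0$, the relator gives $\frac{\partial r}{\partial a}=(1-w^naw^{-n})\frac{\partial w^n}{\partial a}+w^n$. The crucial simplification is that the defining relation yields $\rho(w^naw^{-n})=\rho(b)$, so $\Phi(1-w^naw^{-n})=I-t\rho(b)$, while $\Phi(w^n)=\rho(w)^n$ because $\alpha(w)=0$. Writing $\partial(w^n)/\partial a=\big(\sum_{i=0}^{n-1}w^i\big)\,\partial w/\partial a$, I obtain the concrete $2\times2$ matrix over $\BC[t^{\pm1}]$
$$\Phi\!\left(\frac{\partial r}{\partial a}\right)=(I-t\rho(b))\Big(\sum_{i=0}^{n-1}\rho(w)^i\Big)\Phi\!\left(\frac{\partial w}{\partial a}\right)+\rho(w)^n.$$
Set $A:=\big(\sum_{i=0}^{n-1}\rho(w)^i\big)\Phi(\partial w/\partial a)$ and $Q:=\rho(w)^n$. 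The key algebraic tool throughout is the Cayley--Hamilton identity for $SL_2(\BC)$: any $M$ of trace $\tau$ satisfies $M^k=S_{k-1}(\tau)M-S_{k-2}(\tau)I$, which I would apply with $\tau=z$ to powers of $\rho(w)$ and with $\tau=y$ inside $\Phi(\partial w/\partial a)$, since $\partial w/\partial a$ unfolds into sums of powers of $ba^{-1}$ and $b^{-1}a$, both of trace $y$.

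The first summand of the theorem should emerge as $\det A=\det\big(\sum_{i=0}^{n-1}\rho(w)^i\big)\cdot\det\Phi(\partial w/\partial a)$. Summing the geometric series and using $\tr(w^n)=S_n(z)-S_{n-2}(z)$, the eigenvalue computation $\big(\tfrac{\mu^n-1}{\mu-1}\big)\big(\tfrac{\mu^{-n}-1}{\mu^{-1}-1}\big)=\tfrac{2-\tr(w^n)}{2-z}$ gives exactly $\det\big(\sum_{i=0}^{n-1}\rho(w)^i\big)=\frac{S_n(z)-S_{n-2}(z)-2}{z-2}$. The same argument one level down, applied to the block structure $w=(ba^{-1})^m(b^{-1}a)^m$ with the trace $y$, yields $\det\Phi(\partial w/\partial a)=(t+t^{-1}-x)\frac{S_m(y)-S_{m-2}(y)-2}{y-2}$, reproducing the factor $(t+t^{-1}-x)$ together with the second Chebyshev quotient.

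The main obstacle is the remaining determinant bookkeeping. Expanding via the $2\times2$ identity $\det(P+Q)=\det P+\det Q+\tr P\,\tr Q-\tr(PQ)$ with $P=(I-t\rho(b))A$, the term $\det P=(t^2-xt+1)\det A$ cancels the denominator and delivers the first summand, while the leftover $\frac{1+\tr P\,\tr(w^n)-\tr(P\rho(w)^n)}{t^2-xt+1}$ must be shown to be a genuine Laurent polynomial collapsing to the single $t$-independent term $xS_{m-1}(y)S_{n-1}(z)$. The delicate point is verifying that every genuinely $t$-dependent contribution outside $(t+t^{-1}-x)$ cancels against the denominator; I would organize this by tracking the coefficients of $I$ and of $\rho(w)$ separately, repeatedly invoking $S_k(v)=vS_{k-1}(v)-S_{k-2}(v)$, the trace identity $S_n(v)-S_{n-2}(v)=\tr\rho(w^n)$, and the relation $z=2+(y-2)(y+2-x^2)S_{m-1}^2(y)$ to force the Chebyshev sums into the stated closed form.
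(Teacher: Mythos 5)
Your route is essentially the paper's own proof in light disguise. The paper also applies Wada's formula to the relator $r=w^naw^{-n}b^{-1}$; your Fox-calculus identity $\frac{\partial r}{\partial a}=(1-w^naw^{-n})\frac{\partial w^n}{\partial a}+w^n$ is its Lemma \ref{r/a} before factoring out $w^n$; your determinant evaluations $\det\big(\sum_{i=0}^{n-1}\rho(w)^i\big)=\frac{S_n(z)-S_{n-2}(z)-2}{z-2}$ and $\det\Phi(\partial w/\partial a)\doteq(t+t^{-1}-x)\frac{S_m(y)-S_{m-2}(y)-2}{y-2}$ are the content of Lemma \ref{formulas} and Lemmas \ref{O1}--\ref{O2} (your stated value of $\det\Phi(\partial w/\partial a)$ misses a harmless unit $t^{-1}$ coming from $\Phi(b^{-1})=t^{-1}\rho(b^{-1})$, which is absorbed in the final normalization); and your expansion $\det(P+Q)=\det P+\det Q+\tr P\,\tr Q-\tr(PQ)$ with $Q=\rho(w)^n$ is exactly the paper's $\det(I+\Omega_1\Omega_2)=1+\tr(\Omega_1\Omega_2)+\det(\Omega_1\Omega_2)$ after normalizing by $\det\rho(w^n)=1$. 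Also, "the same argument one level down" for $\det\Phi(\partial w/\partial a)$ is optimistic: that matrix is a \emph{difference} of two geometric-series blocks, not a single one, so its determinant does not factor by multiplicativity; the paper gets it by the explicit entry computation of Lemma \ref{O2} using $P_k(y)=\frac{S_{k+1}(y)-S_k(y)-1}{y-2}$, and you would have to do the same.

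The genuine gap sits precisely at the step you flag as the main obstacle. To collapse the leftover $1+\tr P\,\tr(w^n)-\tr(P\rho(w)^n)$ to the single term giving $xS_{m-1}(y)S_{n-1}(z)$, you propose to use only identities valid for \emph{arbitrary} $(s,y)$: the Chebyshev recurrence, $\tr\rho(w^n)=S_n(z)-S_{n-2}(z)$, and the definition $z=2+(y-2)(y+2-x^2)S^2_{m-1}(y)$. These cannot suffice, because the collapse is not a generic identity in $(s,y)$; it holds only on the Riley variety $\phi_K(s,y)=0$. A concrete check: for $m=n=1$, $s=2$, $y=0$, $t=3$ (which violates Riley even though your substitution $\rho(w^naw^{-n})=\rho(b)$ is already built into $P$), your leftover numerator evaluates to $-391/24$, while the target value $t^{-1}x(t^2-xt+1)S_{m-1}(y)S_{n-1}(z)$ is $25/12$; at the Riley point $y=x^2-1=21/4$ the two agree. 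The missing ingredient is a \emph{second} use of the Riley equation, in the Chebyshev form $S_n(z)=\big\{1+(y+2-x^2)S_{m-1}(y)\big(S_m(y)-S_{m-1}(y)\big)\big\}S_{n-1}(z)$, equivalently $S_{n-2}(z)=\big\{1-(y+2-x^2)S_{m-1}(y)\big(S_{m-1}(y)-S_{m-2}(y)\big)\big\}S_{n-1}(z)$, which is exactly what the paper invokes to reduce its raw expression for $\tr(\Omega_1\Omega_2)$ to $\big((t+t^{-1})x-x^2\big)S_{m-1}(y)S_{n-1}(z)-1$. Once you add this relation to your toolkit the plan closes and reproduces the paper's proof; as written, the cancellation you promise is false at the level of generality of your stated tools.
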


\begin{thm}
\label{main1} 
Suppose $\rho: \pi_1(K) \to SL_2(\BC)$ is a nonabelian representation. If $x \not= 2$ then the Reidemeister torsion of $K$ is given by $$\tau_{\rho}(K)=
(2-x) \left( \frac{S_{m}(y)-S_{m-2}(y)-2}{y-2} \right) \left( \frac{S_{n}(z)-S_{n-2}(z)-2}{z-2} \right) + xS_{m-1}(y) S_{n-1}(z).$$
\end{thm}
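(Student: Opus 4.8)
The plan is to derive Theorem \ref{main1} directly from Theorem \ref{main} by specializing $t$ to $1$. The key ingredient is the identification, due to Kitano \cite{Ki} and Kirk-Livingston \cite{KL}, of the twisted Alexander polynomial with a Reidemeister torsion: $\Delta_{K,\rho}(t)$ is the Reidemeister torsion of the knot exterior $S^3\setminus K$ with respect to the coefficient system $g\mapsto t^{\alpha(g)}\rho(g)$, where $\alpha\colon\pi_1(K)\to\BZ=\la t\ra$ is the abelianization sending each meridian to $t$. Setting $t=1$ collapses this coefficient system to $\rho$ itself, so that $\Delta_{K,\rho}(1)$ is exactly the Reidemeister torsion $\tau_\rho(K)$ of the exterior twisted by $\rho$. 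Thus the heart of the matter is the identity
$$\tau_\rho(K)=\Delta_{K,\rho}(1).$$

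First I would check that this specialization is legitimate, i.e.\ that $\tau_\rho(K)$ is defined and that the evaluation at $t=1$ does not meet the vanishing of the factor $\det(t\rho(a)-I)$ occurring as a denominator in Wada's \cite{Wada94-1} description of the torsion. Since $\rho(a)$ is upper triangular with diagonal entries $s$ and $s^{-1}$, this factor equals $t^2-(s+s^{-1})t+1=t^2-xt+1$; in fact the factor $t+t^{-1}-x$ appearing in Theorem \ref{main} is exactly this determinant up to the unit $t$, since $\det(t\rho(a)-I)=t(t+t^{-1}-x)$. Its value at $t=1$ is $2-x$, so $x\ne2$ is precisely the condition $\det(\rho(a)-I)=2-x\ne0$ keeping the meridian action nondegenerate; this is the standard hypothesis ensuring that the $\rho$-twisted chain complex of the exterior is acyclic and that $\tau_\rho(K)$ is well defined.

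Granting $\tau_\rho(K)=\Delta_{K,\rho}(1)$, the remainder is an immediate substitution. In the formula of Theorem \ref{main} the only $t$-dependence is the factor $t+t^{-1}-x$ multiplying the first summand; at $t=1$ it becomes $2-x$, while the product $\left(\frac{S_m(y)-S_{m-2}(y)-2}{y-2}\right)\left(\frac{S_n(z)-S_{n-2}(z)-2}{z-2}\right)$ and the second summand $xS_{m-1}(y)S_{n-1}(z)$ depend only on $x,y,z$ and are unaffected. This produces precisely the asserted expression for $\tau_\rho(K)$.

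I expect the main obstacle to be the bookkeeping in the first two paragraphs rather than any genuine computation: one must match normalizations so that the torsion-valued twisted Alexander polynomial specializes at $t=1$ to $\tau_\rho(K)$ on the nose---not to $\tau_\rho(K)$ times a unit or a stray power of $t$---and confirm that the acyclicity required for the specialization is exactly $x\ne2$. Once the convention is fixed so that $\Delta_{K,\rho}(t)$ is the full torsion $\tau\bigl(S^3\setminus K;\,t^{\alpha}\otimes\rho\bigr)$, and not merely the order of the twisted Alexander module, the passage from Theorem \ref{main} to Theorem \ref{main1} is the single evaluation above.
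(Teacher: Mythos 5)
Your proposal is correct and follows essentially the same route as the paper: the paper invokes Johnson's theorem (its Theorem \ref{Johnson}), noting $\det(\rho(b)-I)=2-x\neq 0$, to get $\tau_\rho(K)=\Delta_{K,\rho}(1)$ and then evaluates Theorem \ref{main} at $t=1$, exactly as you do. Your extra care about normalization and acyclicity is sound but is precisely what the cited torsion--twisted-Alexander identification already packages, so no further argument is needed.
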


Now let $M$ be the 3-manifold obtained by a $\frac{p}{q}$-surgery on the genus one two-bridge knot $K$. The fundamental group $\pi_1(M)$ has a presentation
$$\pi_1(M) = \la a, b \mid w^n a = bw^n, a^p\lambda^q=1 \ra,$$
where $\lambda$ is the canonical longitude corresponding to the meridian $\mu=a$.

\begin{thm} 
\label{main2}
Suppose $\rho: \pi_1(K) \to SL_2(\BC)$ is a nonabelian representation which extends to a representation $\rho: \pi_1(M) \to SL_2(\BC)$. If $x \not\in \{0,2\}$ then the Reidemeister torsion of $M$ is given by   
\begin{eqnarray*}
\tau_{\rho}(M) &=&
\Big\{ (2-x) \Big( \frac{S_{m}(y)-S_{m-2}(y)-2}{y-2} \Big) \Big( \frac{S_{n}(z)-S_{n-2}(z)-2}{z-2} \Big) \\
&& + \, xS_{m-1}(y) S_{n-1}(z) \Big\} \Big( \frac{4-x^2 + (y+2-x^2)(y-2)S^2_{m-1}(y)}{x^2 (y-2)^2 S^2_{m-1}(y)}\Big).
\end{eqnarray*}
\end{thm}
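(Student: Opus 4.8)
The plan is to compute $\tau_\rho(M)$ from the balanced presentation $\pi_1(M)=\langle a,b \mid R_1,R_2\rangle$, where $R_1=w^n a w^{-n}b^{-1}$ is the knot relator and $R_2=a^p\lambda^q$ is the surgery relator. Following the method of \cite{Tr} for twist knots, the Reidemeister torsion of the closed manifold $M$ is expressed as a ratio whose numerator is a determinant built from the $\rho$-images of the Fox derivatives $\partial R_i/\partial a,\ \partial R_i/\partial b$ and whose denominator involves $\det(\rho(a)-I)=2-x$; this is where the hypothesis $x\neq 2$ enters. The block of this expression coming from $R_1$ is exactly the data computed in the proof of Theorem \ref{main} at $t=1$, so upon expanding the determinant it reassembles into the knot torsion $\tau_\rho(K)$ of Theorem \ref{main1}. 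Hence I expect the whole expression to organize as $\tau_\rho(K)$ times a surgery factor $F$ coming from $R_2$, reducing the problem to the evaluation of $F$.

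To evaluate $F$ I first make $\rho$ explicit on the peripheral subgroup. Since $\lambda$ commutes with $a$ and $\rho(a)$ has distinct eigenvalues $s,s^{-1}$ (as $x\neq\pm2$), $\rho(\lambda)$ is simultaneously diagonalizable with $\rho(a)$; writing the canonical longitude as $\lambda=\overleftarrow{w^{\,n}}\,w^n a^{-2e}$ (the word reversal of $w^n$ times $w^n$, corrected by a power of the meridian so that $\lambda$ is null-homologous), its eigenvalue $L$ on the common eigenvector $(1,0)^{\mathsf T}$ is the top-left entry of the explicit matrix $\rho(\lambda)$. I would compute $\rho(w^n)=S_{n-1}(z)\rho(w)-S_{n-2}(z)I$ by Cayley--Hamilton, reduce the entries through $x,y,z$ and the Chebyshev polynomials, and read off $L+L^{-1}=\tr\rho(\lambda)$. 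A key check is that this should yield $\tr\rho(\lambda)=x^2-z$, so that the numerator of $F$ is $\det(\rho(\lambda)-I)=2-\tr\rho(\lambda)=z+2-x^2$, matching the statement. The Fox derivatives $\partial R_2/\partial a=(1+a+\cdots+a^{p-1})+a^p\,\partial\lambda^q/\partial a$ and $\partial R_2/\partial b=a^p\,\partial\lambda^q/\partial b$ are then computed by the chain rule and pushed through $\rho$.

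The main obstacle is the final simplification of $F$. Two points require care. First, the surgery coefficients $p,q$ must cancel: the extension hypothesis gives $\rho(a^p\lambda^q)=I$, equivalently $s^pL^q=1$ on the common eigenvector, which collapses the telescoping sum $1+\rho(a)+\cdots+\rho(a)^{p-1}$ and the $\lambda^q$-derivatives into expressions in $\rho(a),\rho(\lambda)$ alone, so that $p,q$ disappear from the answer. Second, I must drive the resulting rational function in the matrix entries to the stated closed form using the Chebyshev recursion, the identity $z-2=(y-2)(y+2-x^2)S^2_{m-1}(y)$, and the Riley equation $\phi_K(s,y)=0$; I expect the denominator to consolidate to $x^2(y-2)^2S^2_{m-1}(y)$, with the factor $x$ (hence the hypothesis $x\neq0$) produced by the meridian contribution in $\partial R_2/\partial a$. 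Underlying both points is the normalization of the Reidemeister torsion of the closed manifold: because $\rho$ restricted to the boundary torus is reducible, that torus is not $\rho$-acyclic, and fixing the contribution of the surgery solid torus correctly (as in \cite{Ki2015,Tr}) is the step most prone to sign and normalization errors.
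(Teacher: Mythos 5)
Your high-level strategy---rederiving the closed-manifold torsion by Fox calculus on the presentation $\la a,b \mid R_1, R_2\ra$, so that the $R_1$-block reassembles into $\tau_\rho(K)$ and a surgery factor $F$ remains---is viable in principle; it is essentially the route of \cite{Ki2015,Tr}, whereas the paper shortcuts it by simply invoking the surgery formula \eqref{Dehn}, $\tau_{\rho}(M) = \tau_{\rho}(K)/(2-\tr \rho(\lambda))$, valid when $\tr\rho(\lambda)\neq 2$. But your pivotal quantitative claim is false: $\tr\rho(\lambda) \neq x^2-z$. Test it on $K=J(2,2)$ ($m=n=1$, the trefoil): the Riley equation reduces to $y+2-x^2=1$, so $z=2+(y-2)(y+2-x^2)=y$ and $x^2-z=1$; yet the correct value is $\tr\rho(\lambda)=2-x^2(x^2-3)^2=-(s^6+s^{-6})$, which equals $-2$ at the parabolic representation. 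In general, Proposition \ref{longitude} gives $2-\tr\rho(\lambda)=\frac{x^2(y-2)^2S^2_{m-1}(y)}{z+2-x^2}$---a quotient, not the polynomial $z+2-x^2$---and obtaining this is the real content of the paper's proof: one needs the matrix of $\rho(\overleftarrow{w}^n)$ in addition to $\rho(w^n)$ (the reversed word $\overleftarrow{w}$ is not a power of $w$, so Cayley--Hamilton applied to $\rho(w)$ alone does not produce $\rho(\lambda)$), and, crucially, the Riley equation, used in the form $S_n(z)=\alpha S_{n-1}(z)$ together with $(\alpha^2-z\alpha+1)S^2_{n-1}(z)=1$ (Lemmas \ref{a^2} and \ref{S^2}).

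This error propagates into the structure of your surgery factor. Since \eqref{Dehn} gives $\tau_\rho(M)=\tau_\rho(K)\cdot(2-\tr\rho(\lambda))^{-1}$, the entire bracket $\frac{4-x^2+(y+2-x^2)(y-2)S^2_{m-1}(y)}{x^2(y-2)^2S^2_{m-1}(y)}=\frac{z+2-x^2}{x^2(y-2)^2S^2_{m-1}(y)}$ in the statement is the \emph{reciprocal} of $\det(\rho(\lambda)-I)=2-\tr\rho(\lambda)$; its numerator is not $\det(\rho(\lambda)-I)$, and its denominator is not a separate meridian contribution coming from $\partial R_2/\partial a$---placing $\det(\rho(\lambda)-I)$ in the numerator inverts the factor. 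Likewise, the hypothesis $x\neq 0$ enters not through a factor produced by $1+a+\cdots+a^{p-1}$, but exactly as the condition $\tr\rho(\lambda)\neq 2$ required by \eqref{Dehn}: since $y\neq 2$ and $S_{m-1}(y)\neq 0$ by Lemma \ref{S^2}, one has $2-\tr\rho(\lambda)=0$ if and only if $x=0$. (A minor point: your meridian correction $a^{-2e}$ is vacuous here, since $w$ has zero exponent sum and $\lambda=\overleftarrow{w}^n w^n$ is already null-homologous.) As written, your outline would fail precisely at the step you flag as the ``key check''; to repair it you must carry out the longitude computation of Proposition \ref{longitude} (or an equivalent) and invert your surgery factor.
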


\begin{remark} (1) Theorem \ref{main} generalizes the formula for the twisted Alexander polynomial of twist knots by Morifuji \cite{Mo-08}. 

(2) Theorem \ref{main2} generalizes the formulas for the Reidemeister torsion of the 3-manifold obtained by a $\frac{p}{q}$-surgery on the figure eight knot by Kitano \cite{Ki2015} and on twist knots by the author \cite{Tr}.
\end{remark}

The paper is organized as follows. In Section \ref{nab} we give a formula for the Riley polynomial of a genus one two-bridge knot, and compute the trace of a canonical longitude. In Section \ref{section-R} we review the twisted Alexander polynomial and the Reidemeister torsion of a knot. We prove Theorems \ref{main}, \ref{main1} and \ref{main2} in Section \ref{app}.

\section{Nonabelian representations} 

\label{nab}

In this section we give a formula for the Riley polynomial of a genus one two-bridge knot.  We also compute the trace of a canonical longitude.

\subsection{Chebyshev polynomials}

Recall that $S_k(v)$ are the Chebychev polynomials defined by $S_0(v)=1$, $S_1(v)=v$ and $S_{k}(v) = v S_{k-1}(v) - S_{k-2}(v)$ for all integers $k$. The following lemma is elementary. We will use it many times without referring to it.

\begin{lemma} \label{chev} One has $S^2_k(v) - v S_k(v) S_{k-1}(v) + S^2_{k-1}(v)=1.$
\end{lemma}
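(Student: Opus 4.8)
The plan is to prove the identity by induction on $k$, showing that the quantity $f(k) := S_k^2(v) - v S_k(v) S_{k-1}(v) + S_{k-1}^2(v)$ is in fact independent of $k$. Since the defining recurrence $S_k = v S_{k-1} - S_{k-2}$ holds for all integers $k$ (with $S_{-1}(v)=0$ forced by setting $k=1$ in the recurrence, as $v = v\cdot 1 - S_{-1}$), it suffices to establish this invariance and then evaluate $f$ at a single convenient index.

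First I would record a base value: using $S_1 = v$ and $S_0 = 1$ one has $f(1) = v^2 - v\cdot v\cdot 1 + 1 = 1$ (equivalently $f(0) = 1 - 0 + 0 = 1$ from $S_0=1$, $S_{-1}=0$). Next comes the key step, computing the difference $f(k) - f(k-1)$ and showing it vanishes. Expanding and collecting terms gives
\[
f(k) - f(k-1) = S_k^2 - S_{k-2}^2 - v S_{k-1}\bigl(S_k - S_{k-2}\bigr).
\]
I would then invoke the recurrence in the additive form $S_k + S_{k-2} = v S_{k-1}$ to factor the difference of squares as $S_k^2 - S_{k-2}^2 = (S_k - S_{k-2})(S_k + S_{k-2}) = v S_{k-1}(S_k - S_{k-2})$, which cancels exactly against the remaining term. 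Hence $f(k) = f(k-1)$ for every integer $k$, and combined with $f(1)=1$ this yields $f(k)=1$ for all $k$, which is the claim.

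Since the statement is a routine verification, there is no serious obstacle; the only point requiring a little care is the factoring step, where one must use the recurrence in the form $S_k + S_{k-2} = v S_{k-1}$ (rather than the usual $S_k = v S_{k-1} - S_{k-2}$) so that the difference of squares collapses cleanly against the cross term. Alternatively, one could phrase the same computation conceptually: the column vector $(S_k, S_{k-1})^{\mathsf T}$ is obtained from $(S_{k-1}, S_{k-2})^{\mathsf T}$ by applying the unimodular matrix $\left(\begin{smallmatrix} v & -1 \\ 1 & 0\end{smallmatrix}\right)$, and $f$ is precisely the quadratic form preserved by this matrix, so its value is constant along the sequence. The direct two-line induction, however, is the most economical route, and it is the one I would write out.
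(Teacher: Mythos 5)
Your proof is correct: the telescoping induction $f(k)=f(k-1)$, justified by the recurrence in the additive form $S_k+S_{k-2}=vS_{k-1}$, together with the base value $f(1)=1$ (and the recurrence's validity for all integers $k$, handling negative indices), fully establishes the identity. The paper itself offers no proof --- it simply declares the lemma elementary --- so there is nothing to diverge from; your argument (and the equivalent remark that $f$ is the quadratic form preserved by the unimodular matrix $\left(\begin{smallmatrix} v & -1 \\ 1 & 0 \end{smallmatrix}\right)$) is exactly the kind of routine verification the author had in mind.
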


Let $P_k(v) := \sum_{i=0}^k S_i(v)$. The next two lemmas are proved in \cite{Tr}.

\begin{lemma}
\label{P_k}
One has $P_k(v) = \frac{S_{k+1}(v)-S_{k}(v)-1}{v-2}.$
\end{lemma}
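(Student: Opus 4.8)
The plan is to establish the equivalent polynomial identity $(v-2)P_k(v) = S_{k+1}(v)-S_k(v)-1$ and then divide by $v-2$. I would first note that the right-hand side really is divisible by $v-2$, so that the stated expression is a genuine polynomial identity rather than a formal fraction: since $S_j(2)=j+1$ (an easy induction from the recurrence), one has $S_{k+1}(2)-S_k(2)-1 = (k+2)-(k+1)-1 = 0$, so $v=2$ is a root. This lets me argue entirely at the level of polynomials and avoids any worry about the denominator.

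For the identity itself I would use a telescoping argument built on the recurrence $S_{k+1}(v)=vS_k(v)-S_{k-1}(v)$. Rewriting it as a second-difference relation gives the key computation
\begin{equation*}
S_{k+1}(v)-2S_k(v)+S_{k-1}(v) = (v-2)S_k(v),
\end{equation*}
valid for every integer $k$. Setting $D_i(v):=S_{i+1}(v)-S_i(v)$, this says $D_i(v)-D_{i-1}(v)=(v-2)S_i(v)$, so summing over $i$ from $0$ to $k$ makes the left-hand side telescope and yields
\begin{equation*}
(v-2)\sum_{i=0}^{k}S_i(v) = D_k(v)-D_{-1}(v) = \big(S_{k+1}(v)-S_k(v)\big)-\big(S_0(v)-S_{-1}(v)\big).
\end{equation*}
Using the boundary values $S_0(v)=1$ and $S_{-1}(v)=0$ (the latter read off from the recurrence at $k=1$) collapses the final parenthesis to $1$, giving exactly $(v-2)P_k(v)=S_{k+1}(v)-S_k(v)-1$.

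Alternatively, the same identity comes out of a direct induction on $k$: the base case $k=0$ is the computation $P_0(v)=1=(v-2)^{-1}\big(S_1(v)-S_0(v)-1\big)$, and the inductive step only requires adding $S_k(v)$ to the formula for $P_{k-1}(v)$ and simplifying the numerator with a single application of the recurrence. I expect no real obstacle here, since the result is elementary; the only points demanding care are fixing the conventions $S_{-1}(v)=0$ and $S_0(v)=1$ so that the telescoping endpoints are correct, and recording the divisibility by $v-2$ so that the quotient in the statement is well defined.
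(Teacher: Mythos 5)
Your proof is correct. Note that the paper itself gives no argument for this lemma --- it is stated with the remark that it ``is proved in \cite{Tr}'' --- so there is no in-text proof to compare against; your write-up supplies a complete, self-contained justification of exactly the kind the cited reference uses (induction on $k$ via the recurrence). Your telescoping version is clean: rewriting the recurrence as the second-difference identity $S_{k+1}(v)-2S_k(v)+S_{k-1}(v)=(v-2)S_k(v)$ and summing with the correct boundary conventions $S_0(v)=1$, $S_{-1}(v)=0$ gives $(v-2)P_k(v)=S_{k+1}(v)-S_k(v)-1$ in one stroke, and your preliminary check that $v=2$ is a root of the numerator (via $S_j(2)=j+1$) correctly settles that the quotient in the statement is a genuine polynomial rather than a formal fraction. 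The only caveat worth recording is one of scope: your telescoping sum $\sum_{i=0}^k$ presumes $k\ge 0$, whereas the paper extends $S_k$ (and implicitly $P_k$) to all integers $k$ via the recurrence and applies the companion identities with shifted indices (e.g.\ $P_{n-2}$ with $n$ possibly negative, since only $mn\neq 0$ is assumed); if one adopts the convention $P_{-1}=0$ and $P_k=P_{k-1}+S_k$ for all $k$, both sides of your identity satisfy the same first-order recurrence in $k$ and agree at $k=-1$, so the formula propagates to negative indices as well. With that one-line extension your argument covers everything the paper needs.
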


\begin{lemma}
\label{formulas}
Suppose $V = \left[ \begin{array}{cc}
a & b \\
c & d \end{array} \right] \in SL_2(\BC)$. Then 
\begin{eqnarray}
V^k &=& \left[ \begin{array}{cc}
S_{k}(v) - d S_{k-1}(v) & b S_{k-1}(v) \\
c S_{k-1}(v) & S_{k}(v) - a S_{k-1}(v) \end{array} \right], \label{power}\\
\sum_{i=0}^k V^i &=& \left[ \begin{array}{cc}
P_{k}(v) - d P_{k-1}(v) & b P_{k-1}(v)\\
c P_{k-1}(v) & P_{k}(v) - a P_{k-1}(v) \end{array} \right], \label{sum-power}
\end{eqnarray}
where $v:= \tr V = a+d$. Moreover, one has
\begin{equation} 
\label{det-sum}
\det \left( \sum_{i=0}^k V^i \right) = \frac{S_{k+1}(v) - S_{k-1}(v)-2}{v-2}.
\end{equation}
\end{lemma}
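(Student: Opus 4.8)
The plan is to prove the three formulas in turn, treating \eqref{power} as the foundation, \eqref{sum-power} as routine bookkeeping, and \eqref{det-sum} as the real content.

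First I would establish \eqref{power} by induction on $k$. The Cayley--Hamilton theorem applied to $V \in SL_2(\BC)$ gives $V^2 = vV - I$, so the powers of $V$ obey $V^k = vV^{k-1} - V^{k-2}$, the same three-term recurrence satisfied by the Chebyshev polynomials $S_k(v)$. A clean way to package this is to prove the matrix identity $V^k = S_{k-1}(v)\,V - S_{k-2}(v)\,I$; since $S_{-1}(v)=0$ and $S_{-2}(v)=-1$ it holds for $k=0,1$, and the common recurrence propagates it to all integers $k$ (running backwards as well, using that $V$ is invertible). Reading off the four entries of $S_{k-1}(v)V - S_{k-2}(v)I$ and rewriting the diagonal terms via $S_k(v) = vS_{k-1}(v) - S_{k-2}(v) = (a+d)S_{k-1}(v) - S_{k-2}(v)$ then yields exactly the matrix in \eqref{power}.

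Next, \eqref{sum-power} follows by summing \eqref{power} entrywise over $i = 0,\dots,k$. Since $P_k(v) = \sum_{i=0}^k S_i(v)$, the $(1,1)$-entry becomes $\sum_{i=0}^k S_i(v) - d\sum_{i=0}^k S_{i-1}(v)$, and the convention $S_{-1}(v)=0$ collapses the second sum to $P_{k-1}(v)$; the other three entries are handled identically. This step is purely formal.

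The heart of the lemma is \eqref{det-sum}. I would avoid a brute-force expansion and instead use the geometric-series identity $\sum_{i=0}^k V^i = (V^{k+1}-I)(V-I)^{-1}$, valid whenever $v \neq 2$ so that $V-I$ is invertible, giving $\det\big(\sum_{i=0}^k V^i\big) = \det(V^{k+1}-I)/\det(V-I)$. For any $W \in SL_2(\BC)$ one has $\det(W-I) = \det W - \tr W + 1 = 2 - \tr W$, so the denominator is $2-v$. For the numerator, \eqref{power} gives $\tr V^{k+1} = 2S_{k+1}(v) - vS_k(v) = S_{k+1}(v) - S_{k-1}(v)$ after one use of the recurrence, whence $\det(V^{k+1}-I) = 2 - S_{k+1}(v) + S_{k-1}(v)$. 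Dividing yields $(S_{k+1}(v)-S_{k-1}(v)-2)/(v-2)$, as claimed. The one subtlety, and the place I expect to spend the most care, is the excluded value $v=2$: both sides are polynomials in the entries of $V$ (note $S_{k+1}(2)-S_{k-1}(2)-2=0$, so the right-hand side is genuinely a polynomial after cancelling $v-2$), so the identity on the open set $v\neq 2$ extends to $v=2$ by continuity. Alternatively, one can run the direct computation, reducing $\det\big(\sum_{i=0}^k V^i\big)$ to $P_k^2 - vP_kP_{k-1} + P_{k-1}^2$ using $\det V = 1$ and $\tr V = v$, and then simplifying with Lemma \ref{P_k} and Lemma \ref{chev}; but the geometric-series route is cleaner and sidesteps that algebra entirely.
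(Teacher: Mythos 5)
Your proof is correct in all three parts. For the record, the paper itself contains no proof of this lemma --- it defers to \cite{Tr} --- and the argument there is essentially the direct one you name as an alternative at the end: \eqref{power} by the Cayley--Hamilton/induction argument you also use, \eqref{sum-power} by entrywise summation, and \eqref{det-sum} by expanding the determinant of the matrix in \eqref{sum-power}, which (using $ad-bc=1$) equals $P_{k}^2(v) - vP_{k}(v)P_{k-1}(v) + P_{k-1}^2(v)$ and is then collapsed to the stated form via Lemma \ref{P_k} and the identity of Lemma \ref{chev}. Your geometric-series route for \eqref{det-sum}, writing $\sum_{i=0}^{k} V^i = (V^{k+1}-I)(V-I)^{-1}$ and using $\det(W-I) = 2 - \tr W$ for $W \in SL_2(\BC)$ together with $\tr V^{k+1} = 2S_{k+1}(v) - vS_{k}(v) = S_{k+1}(v) - S_{k-1}(v)$, is genuinely different and arguably cleaner: it replaces the Chebyshev algebra with a single trace computation. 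Its only cost is the excluded locus $v = 2$, and you handle that correctly --- since $S_k(2) = k+1$, the numerator $S_{k+1}(v) - S_{k-1}(v) - 2$ vanishes at $v=2$, so the right-hand side is a polynomial in $v$, and both sides are polynomial functions on the irreducible variety $SL_2(\BC)$, whence agreement on the dense open set $\{\tr V \neq 2\}$ extends everywhere. A further small merit of your packaging $V^k = S_{k-1}(v)\,V - S_{k-2}(v)\,I$ with $S_{-1}(v)=0$, $S_{-2}(v)=-1$ is that it establishes \eqref{power} for all integers $k$, including negative ones, which the paper actually needs since it applies the lemma with exponents $m,n$ that may be negative.
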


\subsection{The Riley polynomial} Recall that $K = J(2m,2n)$.  The knot group of $K$ has a presentation $\pi_1(K) = \la a, b \mid w^n a = b w^n\ra$ where $a,b$ are meridians and $w=(ba^{-1})^m(b^{-1}a)^m$, see \cite{HS}.  Suppose $\rho: \pi_1(K) \to SL_2(\BC)$ is a nonabelian representation. Up to conjugation, we may assume that $$\rho(a) = \left[ \begin{array}{cc}
s & 1 \\
0 & s^{-1} \end{array} \right] \quad \text{and} \quad \rho(b) = \left[ \begin{array}{cc}
s & 0 \\
2-y & s^{-1} \end{array} \right]$$ where $s \not=0$ and $y \not= 2$ satisfy the Riley equation $\phi_K(s,y)=0$.

We now compute $\phi_K(s,y)$. Since $\rho(ba^{-1})=\left[ \begin{array}{cc}
1 & -s \\
s^{-1}(2-y) & y-1 \end{array} \right]$ and $y=\tr \rho(ba^{-1})$, by Lemma \ref{formulas} we have 
$$\rho((ba^{-1})^m)=\left[ \begin{array}{cc}
S_m(y)-(y-1)S_{m-1}(y) & -sS_{m-1}(y) \\
s^{-1}(2-y)S_{m-1}(y) & S_m(y)-S_{m-1}(y) \end{array} \right].$$
Similarly $$\rho((b^{-1}a)^m)=\left[ \begin{array}{cc}
S_m(y)-(y-1)S_{m-1}(y) & s^{-1}S_{m-1}(y) \\
s(y-2)S_{m-1}(y) & S_m(y)-S_{m-1}(y) \end{array} \right].$$
Hence $\rho(w)=\rho((ba^{-1})^m(b^{-1}a)^m)=\left[ \begin{array}{cc}
W_{11} & W_{12} \\
(2-y)W_{12} & W_{22} \end{array} \right]$ where
\begin{eqnarray*}
W_{11} &=& S^2_{m}(y) + (2-2y)S_m(y)S_{m-1}(y) + (1+2s^2-2y-s^2y+y^2)S^2_{m-1}(y),\\
W_{12} &=& (s^{-1}-s)S_m(y)S_{m-1}(y) + (s^{-1}+s-s^{-1}y)S^2_{m-1}(y),\\
W_{22} &=& S^2_{m}(y) - 2S_m(y)S_{m-1}(y) + (1+2s^{-2}-s^{-2}y)S^2_{m-1}(y).
\end{eqnarray*}

Let $z=\tr \rho(w)$. Since $S^2_{m}(y) - yS_m(y)S_{m-1}(y) +S^2_{m-1}(y)=1$ (by Lemma \ref{chev}), we have 
\begin{eqnarray*}
z = W_{11} + W_{22} &=& 2(S^2_{m}(y) - yS_m(y)S_{m-1}(y) +S^2_{m-1}(y))\\
&&+ \, (2s^2+2s^{-2}-2y-s^2y-s^{-2}y+y^2)S^2_{m-1}(y)\\
&=& 2 + (y-2)(y-s^2-s^{-2})S^2_{m-1}(y).
\end{eqnarray*}
By Lemma \ref{formulas} we have $\rho(w^n) = \left[ \begin{array}{cc}
S_n(z)-W_{22} \, S_{n-1}(z) & W_{12} \, S_{n-1}(z) \\
(2-y)W_{12} \, S_{n-1}(z) & S_n(z)-W_{11} \, S_{n-1}(z) \end{array} \right]$. Hence
$$\rho(w^na-bw^n)=\left[ \begin{array}{cc}
0 & \phi_K(s,y) \\
(2-y)\phi_K(s,y) & 0 \end{array} \right]$$ where $\phi_K(s,y)$
\begin{eqnarray*}
 &=& S_n(z) - \big\{ (s-s^{-1})W_{12}+W_{22} \big\}  S_{n-1}(z)\\
&=& S_n(z) - \big\{ S^2_{m}(y) - (s^2+s^{-2})S_m(y)S_{m-1}(y) + (1+s^2+s^{-2}-y)S^2_{m-1}(y) \big\} S_{n-1}(z)\\
&=& S_n(z) - \big\{ 1  + (y-s^2-s^{-2})S_{m-1}(y) \big( S_m(y)- S_{m-1}(y) \big) \big\} S_{n-1}(z).
\end{eqnarray*}

\begin{remark}
Similar formulas for $\phi_K(s,y)$ were already obtained in \cite{MPL, MT}.
\end{remark}

\subsection{Trace of the longitude} By \cite{HS} the canonical longitude of $K$ corresponding to the meridian $\mu=a$ is $\lambda=\overleftarrow{w}^n w^n$, where $\overleftarrow{w}$ is the word in the letters $a,b$ obtained by writing $w$ in the reversed order. We now compute its trace. This computation will be used in the proof of Theorem \ref{main2}.

Let $\alpha = 1  + (y-s^2-s^{-2})S_{m-1}(y) \big( S_m(y)- S_{m-1}(y)$.

\begin{lemma} \label{a^2}
One has
$$\alpha^2 - z \alpha +1 = (y-s^2-s^{-2})S^2_{m-1}(y) \big( 2-s^2-s^{-2} + (y-s^2-s^{-2})(y-2)S^2_{m-1}(y) \big).$$
\end{lemma}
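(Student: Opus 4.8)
The plan is to verify this as a direct polynomial identity in the formal variables $s$ and $y$, using only the Chebyshev relation from Lemma \ref{chev}. To keep the bookkeeping manageable I would abbreviate $c := y - s^2 - s^{-2}$, $S := S_{m-1}(y)$ and $T := S_m(y)$, so that the two quantities in play read $\alpha = 1 + cS(T-S)$ and $z = 2 + (y-2)cS^2$, while Lemma \ref{chev} becomes $T^2 - yST + S^2 = 1$.

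First I would substitute these into $\alpha^2 - z\alpha + 1$ and expand. The constant terms $1 - 2 + 1$ cancel, and the two copies of $2cS(T-S)$ (one arising from $\alpha^2$, one from $z\alpha$) cancel as well, leaving
$$\alpha^2 - z\alpha + 1 = c^2S^2(T-S)^2 - (y-2)cS^2 - (y-2)c^2S^3(T-S).$$
Factoring out the common $cS^2$ reduces the target to the identity
$$c(T-S)^2 - (y-2) - (y-2)cS(T-S) = c - (y-2) + c(y-2)S^2,$$
where I have used that the claimed right-hand side equals $cS^2\big(2 - s^2 - s^{-2} + c(y-2)S^2\big)$ together with the rewriting $2 - s^2 - s^{-2} = c - (y-2)$.

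The final reduction is where Lemma \ref{chev} enters. Cancelling the common term $-(y-2)$ and removing the overall factor $c$, the identity to prove becomes $(T-S)^2 - (y-2)S(T-S) = 1 + (y-2)S^2$. Expanding the left-hand side, the cross terms $-2TS$ and $+2TS$ cancel, and the surviving terms regroup as $(T^2 - yST + S^2) + (y-2)S^2$; substituting $T^2 - yST + S^2 = 1$ yields exactly the right-hand side.

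I expect no genuine obstacle here, as the statement is a routine algebraic verification. The one point requiring care is the rewriting $2 - s^2 - s^{-2} = (y - s^2 - s^{-2}) - (y - 2)$, which is precisely what lets the remaining $s$-dependence on the right-hand side be absorbed into $c$, so that the whole identity collapses onto the single Chebyshev relation of Lemma \ref{chev}. Organizing the expansion so that this cancellation is visible, rather than carrying the powers of $s$ explicitly through every term, is the main thing that keeps the computation clean.
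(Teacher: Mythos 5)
Your proof is correct and follows essentially the same route as the paper: a direct expansion of $\alpha^2 - z\alpha + 1$, factoring out $(y-s^2-s^{-2})S^2_{m-1}(y)$, and a single application of the identity $S^2_m(y) - yS_m(y)S_{m-1}(y) + S^2_{m-1}(y) = 1$ from Lemma \ref{chev} (indeed, your grouped expression $c(T-S)^2 - (y-2)cS(T-S)$ expands to exactly the paper's bracket $c\big(S^2_m(y) - yS_m(y)S_{m-1}(y) + (y-1)S^2_{m-1}(y)\big)$). The only cosmetic difference is that the paper absorbs the rewriting $2-y+c = 2-s^2-s^{-2}$ at the end rather than at the start, which changes nothing of substance.
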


\begin{proof}
By a direct calculation we have
\begin{eqnarray*}
\alpha^2 - z \alpha +1 &=& (y-s^2-s^{-2})S^2_{m-1}(y)\big\{ 2-y + (y-s^2-s^{-2}) \\
&&\big( S^2_{m}(y) - yS_m(y)S_{m-1}(y) + (y-1)S^2_{m-1}(y) \big) \big\}.
\end{eqnarray*}
The lemma follows, since $S^2_{m}(y) - yS_m(y)S_{m-1}(y) + S^2_{m-1}(y)=1$.
\end{proof}

\begin{lemma} 
\label{S^2}
One has
$$S^2_{n-1}(z)= \big\{ (y-s^2-s^{-2})S^2_{m-1}(y) \left( 2-s^2-s^{-2} + (y-s^2-s^{-2})(y-2)S^2_{m-1}(y) \right) \big\}^{-1}.$$
\end{lemma}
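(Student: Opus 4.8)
The plan is to read off the key relation from the Riley equation computed in the previous subsection and then combine it with the two Chebyshev lemmas that have already been established. Recall that $\phi_K(s,y)$ was shown to equal $S_n(z) - \alpha S_{n-1}(z)$, where $\alpha = 1 + (y-s^2-s^{-2})S_{m-1}(y)\big(S_m(y)-S_{m-1}(y)\big)$. Since $\rho$ is a nonabelian representation, the pair $(s,y)$ satisfies the Riley equation $\phi_K(s,y)=0$, and therefore the single scalar identity
$$S_n(z) = \alpha\, S_{n-1}(z)$$
holds. This is the only input from the representation that is needed.

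Next I would apply Lemma \ref{chev} with $v$ replaced by $z$, giving $S^2_n(z) - z\,S_n(z)S_{n-1}(z) + S^2_{n-1}(z)=1$. Substituting the relation $S_n(z)=\alpha S_{n-1}(z)$ into each occurrence of $S_n(z)$ turns the left-hand side into $S^2_{n-1}(z)\big(\alpha^2 - z\alpha + 1\big)$, so that
$$S^2_{n-1}(z)\,\big(\alpha^2 - z\alpha + 1\big) = 1.$$
It then remains only to identify the factor $\alpha^2 - z\alpha + 1$, which is precisely the quantity evaluated in Lemma \ref{a^2}; substituting that evaluation and inverting yields the claimed expression for $S^2_{n-1}(z)$.

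There is essentially no obstacle here beyond bookkeeping: the whole statement is an immediate consequence of chaining the Riley equation against Lemma \ref{chev} and Lemma \ref{a^2}. The one point worth flagging is the implicit nonvanishing assumption — the inversion is legitimate only when $\alpha^2 - z\alpha + 1 \neq 0$, equivalently when $S_{n-1}(z)\neq 0$. Since the lemma asserts an equality of the reciprocal, I would simply note that the right-hand side is well defined exactly on the locus where $S_{n-1}(z)\neq 0$, which is where the formula is intended to be used (in particular in the proof of Theorem \ref{main2}, where the denominator $x^2(y-2)^2 S^2_{m-1}(y)$ already presupposes such genericity). No further case analysis is required.
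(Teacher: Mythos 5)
Your proposal is correct and is essentially identical to the paper's own proof: use the Riley equation to get $S_n(z)=\alpha S_{n-1}(z)$, substitute into the Chebyshev identity of Lemma \ref{chev} to obtain $(\alpha^2-z\alpha+1)S^2_{n-1}(z)=1$, and identify $\alpha^2-z\alpha+1$ via Lemma \ref{a^2}. The nonvanishing caveat you flag is in fact automatic — the identity $(\alpha^2-z\alpha+1)S^2_{n-1}(z)=1$ itself forces both factors to be nonzero, so no genericity hypothesis or case analysis is needed.
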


\begin{proof}
 Since $s \not= 0$ and $y \not= 2$ satisfy the Riley equation $\phi_K(s,y)=0$, we have $S_n(z) =\alpha S_{n-1}(z)$. Hence
$$1 = S^2_n(z) - z S_n(z) S_{n-1}(z) + S^2_{n-1}(z) = (\alpha^2 - z \alpha +1)S^2_{n-1}(z).$$
The lemma then follows from Lemma \ref{a^2}.
\end{proof}

\begin{proposition}
\label{longitude}
One has $$\tr\rho(\lambda)  = 2 - \frac{(s+s^{-1})^2 (y-2)^2 S^2_{m-1}(y)}{2-s^2-s^{-2} + (y-s^2-s^{-2})(y-2)S^2_{m-1}(y)}.$$
\end{proposition}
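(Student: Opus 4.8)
The plan is to express $\rho(\lambda)$ entirely through the matrix $W:=\rho(w)$ that has already been computed. First I would record that reversing the word $w=(ba^{-1})^m(b^{-1}a)^m$ produces $\overleftarrow{w}=(ab^{-1})^m(a^{-1}b)^m$, that is, $w$ with the roles of $a$ and $b$ interchanged. Rather than recompute $\rho(\overleftarrow{w})$ from scratch, I would exploit transposition, which is an anti-homomorphism: if one can find a fixed $J\in SL_2(\BC)$ (up to scalar) with $J\rho(a)^{T}J^{-1}=\rho(a)$ and $J\rho(b)^{T}J^{-1}=\rho(b)$, then for every word $g$ in $a^{\pm1},b^{\pm1}$ one gets $\rho(\overleftarrow{g})=J\rho(g)^{T}J^{-1}$, since transposing a product reverses the order of the factors. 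Solving the resulting linear conditions gives such a matrix explicitly, e.g.
\[
J=\left[\begin{smallmatrix} (s-s^{-1})/(2-y) & 1 \\[2pt] 1 & s^{-1}-s\end{smallmatrix}\right],\qquad \det J=\frac{y-s^2-s^{-2}}{2-y}.
\]
Consequently $\rho(\overleftarrow{w})=JW^{T}J^{-1}$, so $\rho(\lambda)=\rho(\overleftarrow{w})^n W^n=J(W^n)^{T}J^{-1}W^n$ and, by cyclic invariance, $\tr\rho(\lambda)=\tr\big((W^n)^{T}J^{-1}W^nJ\big)$.

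Next I would write $W^n=\left[\begin{smallmatrix} A & B \\ C & D\end{smallmatrix}\right]$. From the explicit form of $\rho(w^n)$ obtained above one has $C=(2-y)B$, and of course $AD-BC=1$, hence $AD=1+(2-y)B^2$. Expanding the quadratic form $\tr\big((W^n)^{T}J^{-1}W^nJ\big)$ in the entries $A,B,C,D$ and substituting $C=(2-y)B$ and $AD=1+(2-y)B^2$, the terms proportional to $(s-s^{-1})^2B^2$ cancel and the remainder completes to a perfect square. Writing $\delta=s-s^{-1}$ and using the identity $\delta^2+(2-y)=-(y-s^2-s^{-2})$, the outcome is the compact formula
\[
\tr\rho(\lambda)=2-\frac{\big(\delta(A-D)+2(2-y)B\big)^2}{y-s^2-s^{-2}}.
\]
This collapse from the multi-term quadratic form to a single square is the step I expect to be the main obstacle: it is pure bookkeeping, but the cancellation of the $\delta^2B^2$ terms together with the emergence of $-(y-s^2-s^{-2})$ as the effective denominator must be tracked carefully, and it is exactly what makes the formula simplify.

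Finally I would evaluate the square. Since $A-D=(W_{11}-W_{22})S_{n-1}(z)$ and $B=W_{12}S_{n-1}(z)$, it suffices to simplify $\delta(W_{11}-W_{22})+2(2-y)W_{12}$. Using the explicit $W_{11},W_{22},W_{12}$, the coefficient of $S_m(y)S_{m-1}(y)$ vanishes, while the coefficient of $S^2_{m-1}(y)$ reduces, via $s^3+s^{-3}+s+s^{-1}=(s+s^{-1})(s^2+s^{-2})$, to $-(s+s^{-1})(y-s^2-s^{-2})$. Hence $\delta(A-D)+2(2-y)B=-(2-y)(s+s^{-1})(y-s^2-s^{-2})S^2_{m-1}(y)S_{n-1}(z)$, and substituting into the identity above gives
\[
\tr\rho(\lambda)=2-(s+s^{-1})^2(2-y)^2(y-s^2-s^{-2})S^4_{m-1}(y)S^2_{n-1}(z).
\]
The proof then concludes by replacing $S^2_{n-1}(z)$ with its value from Lemma \ref{S^2}, which cancels one factor of $y-s^2-s^{-2}$ and two factors of $S_{m-1}(y)$ and leaves precisely the denominator $2-s^2-s^{-2}+(y-s^2-s^{-2})(y-2)S^2_{m-1}(y)$, yielding the claimed expression for $\tr\rho(\lambda)$.
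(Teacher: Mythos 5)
Your proof is correct, and it reaches the paper's result by a genuinely different route at the key step. The paper handles the reversed word head-on: it computes $\rho(\overleftarrow{w}^n)$ entry-by-entry (the matrices $\overleftarrow{W}_{11},\overleftarrow{W}_{12},\overleftarrow{W}_{22}$, mirror images of the $W_{ij}$), then expands $\tr\big(\rho(\overleftarrow{w}^n)\rho(w^n)\big)$ directly, using $S^2_m(y)-yS_m(y)S_{m-1}(y)+S^2_{m-1}(y)=1$ to collapse the $S^2_n(z)$ and $S_n(z)S_{n-1}(z)$ terms. You avoid that second matrix computation entirely via the transpose anti-automorphism: your intertwiner $J$ does satisfy $J\rho(a)^TJ^{-1}=\rho(a)$ and $J\rho(b)^TJ^{-1}=\rho(b)$ (I checked the linear conditions), so $\rho(\overleftarrow{w}^n)=J(W^n)^TJ^{-1}$, and your quadratic-form identity $\tr\rho(\lambda)=2-\big(\delta(A-D)+2(2-y)B\big)^2/(y-s^2-s^{-2})$ is a correct consequence of $C=(2-y)B$ and $AD-BC=1$ (the difference of the two sides reduces to $2(\delta^2+2-y)\big[(2-y)B^2-AD+1\big]=0$). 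From there your evaluation of $\delta(W_{11}-W_{22})+2(2-y)W_{12}$ matches the paper's intermediate formula $\tr\rho(\lambda)=2-(s+s^{-1})^2(y-2)^2(y-s^2-s^{-2})S^4_{m-1}(y)S^2_{n-1}(z)$, and both proofs conclude identically with Lemma \ref{S^2}. What each approach buys: the paper's is elementary bookkeeping but duplicates the $m$-twist computation; yours halves the matrix work and explains structurally why $\tr\rho(\lambda)-2$ is minus a perfect square over $y-s^2-s^{-2}$ (a fact invisible in the paper's direct expansion). Two small points to tighten: your $J$ has $\det J=(y-s^2-s^{-2})/(2-y)$, so it is generally not in $SL_2(\BC)$ — harmless, since only its conjugation action is used, but the phrasing should say so — and its invertibility, as well as your final division, requires $y\neq s^2+s^{-2}$; this is automatic for a nonabelian representation satisfying the Riley equation, since Lemma \ref{S^2} exhibits $(y-s^2-s^{-2})S^2_{m-1}(y)\big(2-s^2-s^{-2}+(y-s^2-s^{-2})(y-2)S^2_{m-1}(y)\big)S^2_{n-1}(z)=1$, so the factor cannot vanish; one sentence noting this would make the argument airtight.
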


\begin{proof}
We have $\rho(w^n) = \left[ \begin{array}{cc}
S_n(z)-W_{22} \, S_{n-1}(z) & W_{12} \, S_{n-1}(z) \\
(2-y)W_{12} \, S_{n-1}(z) & S_n(z)-W_{11} \, S_{n-1}(z) \end{array} \right]$. Similarly,
$$\rho(\overleftarrow{w}^n) = \left[ \begin{array}{cc}
S_n(z)-\overleftarrow{W}_{22} \, S_{n-1}(z) & \overleftarrow{W}_{12} \, S_{n-1}(z) \\
(2-y)\overleftarrow{W}_{12} \, S_{n-1}(z) & S_n(z)-\overleftarrow{W}_{11} \, S_{n-1}(z) \end{array} \right]$$
where
\begin{eqnarray*}
\overleftarrow{W}_{11} &=& S^2_{m}(y) - 2S_m(y)S_{m-1}(y) + (1+2s^2-s^2y)S^2_{m-1}(y),\\
\overleftarrow{W}_{12} &=& (s-s^{-1})S_m(y)S_{m-1}(y) + (s^{-1}+s-sy)S^2_{m-1}(y),\\
\overleftarrow{W}_{22} &=& S^2_{m}(y) + (2-2y)S_m(y)S_{m-1}(y) + (1+2s^{-2}-2y-s^{-2}y+y^2)S^2_{m-1}(y).
\end{eqnarray*}
By a direct calculation, using $S^2_{m}(y) - yS_m(y)S_{m-1}(y) + S^2_{m-1}(y)=1$, we have
\begin{eqnarray*}
\tr \rho(\lambda) &=& \tr (\rho(\overleftarrow{w}^n)\rho(w)) \\
&=& 2 S^2_{n}(z) - 2 \big\{ 2 + (y-2)(y-s^2-s^{-2})S^2_{m-1}(y) \big\} S_{n}(z) S_{n-1}(z) \\
&& + \, \big\{ 2 - (s+s^{-1})^2 (y-2)^2 (y-s^2-s^{-2})S^4_{m-1}(y) \big\} S^2_{n-1}(z)\\
&=& 2 - (s+s^{-1})^2 (y-2)^2 (y-s^2-s^{-2})S^4_{m-1}(y) S^2_{n-1}(z).
\end{eqnarray*}
The lemma then follows from Lemma \ref{S^2}. 
\end{proof}

\section{Twisted Alexander polynomial and Reidemeister torsion} 

\label{section-R}

In this section we briefly review the twisted Alexander polynomial and the Reidemeister torsion of a knot. For more details, see \cite{Li, Wada94-1, FV,Mo-15,Jo, Mi66, Tu}.

\subsection{Twisted Alexander polynomial of a knot}

Let $L$ be a knot in $S^3$. We choose a Wirtinger presentation for the knot group of $L$:
$$
\pi_1(L)=
\langle a_1,\ldots,a_l~|~r_1,\ldots,r_{l-1}\rangle.
$$
The abelianization homomorphism 
$f:\pi_1(L)\to H_1(S^3 \setminus L;\BZ)
\cong {\BZ}
=\langle t
\rangle$ 
is given by 
$f(a_1)=\cdots=f(a_l)=t$. 
Here 
we specify a generator $t$ of $H_1(S^3\backslash K;\BZ)$ 
and denote the sum in $\BZ$ multiplicatively. 

Let $\rho: \pi_1(L) \to SL_2(\BC)$ be a representation. The maps $\rho$ and $f$ naturally induce two ring homomorphisms 
$\tilde{\rho}: {\BZ}[\pi_1(L)] \rightarrow M_2({\BC})$ 
and $\tilde{f}:{\BZ}[\pi_1(L)]\rightarrow {\BZ}[t^{\pm1}]$ respectively, 
where ${\BZ}[\pi_1(L)]$ is the group ring of $\pi_1(L)$ 
and 
$M_2({\BC})$ is the matrix algebra of degree $2$ over ${\BC}$. 
Then 
$\Phi : = \tilde{\rho}\otimes\tilde{f}$ 
defines a ring homomorphism 
${\BZ}[\pi_1(L)]\to M_2\left({\BC}[t^{\pm1}]\right)$. 

Consider the $(l-1)\times l$ matrix $A$ 
whose $(i,j)$-component is the $2\times 2$ matrix 
$$
\Phi\left(\frac{\partial r_i}{\partial a_j}\right)
\in M_2\left({\BZ}[t^{\pm1}]\right),
$$
where 
${\partial}/{\partial a}$ 
denotes the Fox's free calculus. 
For 
$1\leq j\leq l$, 
denote by $A_j$ 
the $(l-1)\times(l-1)$ matrix obtained from $A$ 
by removing the $j$th column. 
We regard $A_j$ as 
a $2(l-1)\times 2(l-1)$ matrix with coefficients in 
$\BC[t^{\pm1}]$. Then Wada's twisted Alexander polynomial \cite{Wada94-1}
of a knot $L$ 
associated to a representation $\rho:\pi_1(L)\to SL_2({\BC})$ 
is defined to be 
$$
\Delta_{L,\rho}(t)
=\frac{\det A_j}{\det\Phi(a_j-1)} .
$$
Note that $\Delta_{L,\rho}(t)$ is well-defined 
up to a factor $t^{2k}~(k\in{\BZ})$. 

\subsection{Torsion of a chain complex}

Let $C$ be a chain complex of finite dimensional vector spaces over $\BC$:
$$
C = \left( 0 \to C_m \stackrel{\partial_m}{\longrightarrow} C_{m-1} 
\stackrel{\partial_{m-1}}{\longrightarrow} \cdots \stackrel{\partial_{2}}{\longrightarrow} 
C_1 \stackrel{\partial_{1}}{\longrightarrow} C_0 \to 0\right)
$$
such that for each $i=0,1, \cdots, m$ the followings hold
\begin{itemize}
\item the homology group $H_i(C)$ is trivial, and
\item a preferred basis $c_i$ of $C_i$ is given.
\end{itemize}

Let $B_i\subset C_i$ be the image of $\partial_{i+1}$. For each $i$ 
choose a basis $b_i$ of $B_i$. The short exact sequence of $\BC$-vector spaces
$$
0 \to B_{i} \longrightarrow C_i \stackrel{\partial_i}{\longrightarrow} B_{i-1} \to 0
$$
implies that  a new basis of $C_i$ can be obtained by taking the union of the vectors of $b_i$
and some lifts $\tilde{b}_{i-1}$ of the vectors $b_{i-1}$. Define $[(b_i \cup \tilde{b}_{i-1})/c_i]$ to be the determinant of the matrix expressing $(b_i \cup \tilde{b}_{i-1})$ in the basis $c_i$. Note that this scalar does not depend on the choice of the lift $\tilde{b}_{i-1}$ of $b_{i-1}$.

The torsion of $C$ is defined to be
$$
\tau(C) := \prod_{i=0}^m \ [(b_i \cup \tilde{b}_{i-1})/c_i]^{(-1)^{i+1}} \ \in \BC\setminus\{0\}.
$$

\begin{remark} Once a preferred basis of $C$ is given, the torsion $\tau(C)$ is independent of the choice of $b_0,\dots,b_m$.
\end{remark}

\subsection{Reidemeister torsion of a CW-complex} Let $M$ be a finite CW-complex and $\rho: \pi_1(M) \to SL_2(\BC)$ a representation. Denote by $\tilde{M}$ the universal covering of $M$. The fundamental group $\pi_1(M)$ acts on $\tilde{M}$ as deck transformations. Then the chain complex $C(\tilde{M}; \BZ)$ has the structure of a chain complex of left $\BZ[\pi_1(M)]$-modules. 

Let $V$ be the 2-dimensional vector space $\BC^2$ with the canonical basis $\{e_1, e_2\}$. Using the representation $\rho$, $V$ has the structure of a right $\BZ[\pi_1(M)]$-module which we denote by $V_{\rho}$. Define the chain complex $C(M; V_{\rho})$ to be $C(\tilde{M}; \BZ) \otimes_{\BZ[\pi_1(M)]} V_{\rho}$, and choose a preferred basis of $C(M; V_{\rho})$ as follows. Let $\{u^i_1, \cdots, u^i_{m_i}\}$ be the set of $i$-cells of $M$, and choose a lift $\tilde{u}^i_j$ of each cell. Then 
$\{ \tilde{u}^i_1 \otimes e_1, \tilde{u}^i_1 \otimes e_2, \cdots, \tilde{u}^i_{m_i} \otimes e_1, \tilde{u}^i_{m_i} \otimes e_2\}$ is chosen to be the preferred basis of $C_i(M; V_{\rho})$.

The Reidemeister torsion $\tau_{\rho}(M)$ is defined as follows:
$$\tau_{\rho}(M)=\begin{cases} \tau(C(M; V_{\rho})) &\mbox{if } \rho \mbox{ is acyclic}, \\ 
0 & \mbox{otherwise}. \end{cases} $$
Here a representation $\rho$ is called acyclic if all the homology groups $H_i(M; V_{\rho})$ are trivial. 

For a knot $L$ in $S^3$ and a representation $\rho: \pi_1(L) \to SL_2(\BC)$, the Reidemeister torsion $\tau_{\rho}(L)$ of $L$ is defined to be that of the knot complement $S^3 \setminus L$. 

The following result which relates the Reidemeister torsion and the twisted Alexander polynomial of a knot is due to Johnson.

\begin{theorem} \cite{Jo} 
\label{Johnson}
Let $\rho: \pi_1(L) \to SL_2(\BC)$ be a representation such that $\det(\rho(\mu)-I) \not= 0$, where $\mu$ is a meridian of $L$. Then the Reidemeister torsion of $L$ is given by
$$
\tau_\rho(L)
=\Delta_{L,\rho}(1). 
$$
\end{theorem}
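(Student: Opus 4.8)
The plan is to realize both $\tau_\rho(L)$ and $\Delta_{L,\rho}(1)$ as the torsion of a single based acyclic chain complex built from the Wirtinger presentation $\langle a_1,\dots,a_l \mid r_1,\dots,r_{l-1}\rangle$. First I would replace the knot complement $S^3\setminus L$ by the presentation $2$-complex $W$ of this presentation, which has one $0$-cell, $l$ one-cells (one per generator $a_j$) and $l-1$ two-cells (one per relator $r_i$). Since $W$ is simple homotopy equivalent to $S^3\setminus L$, the Reidemeister torsion is unchanged, so it suffices to compute $\tau_\rho(W)$.

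Next I would write down the twisted chain complex $C(W;V_\rho)$ explicitly. Lifting the cells to the universal cover and tensoring with $V_\rho$ over $\BZ[\pi_1(L)]$, it takes the form
$$0 \to V^{\,l-1} \xrightarrow{\partial_2} V^{\,l} \xrightarrow{\partial_1} V \to 0,$$
where $\partial_2$ is the block matrix with $(i,j)$ entry $\tilde{\rho}(\partial r_i/\partial a_j)$ and $\partial_1$ is the column whose $j$-th block is $\tilde{\rho}(a_j)-I$. These are exactly the matrices $A$ and $\big(\Phi(a_j-1)\big)_j$ defining $\Delta_{L,\rho}$ in Section \ref{section-R}, specialized at $t=1$: since $\tilde{f}$ sends every generator to $t$, setting $t=1$ replaces $\Phi$ by $\tilde{\rho}$. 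Because all meridians are conjugate, the hypothesis $\det(\rho(\mu)-I)\neq 0$ yields $\det(\tilde{\rho}(a_j)-I)\neq 0$ for every $j$; a standard homology computation then shows $C(W;V_\rho)$ is acyclic, so $\tau_\rho(L)$ is defined and equals $\tau\big(C(W;V_\rho)\big)$.

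Finally I would evaluate this torsion directly from the definition. Fix an index $j$ with $\det(\tilde{\rho}(a_j)-I)\neq 0$; then the $j$-th block of $\partial_1$ identifies the corresponding copy of $V$ inside $C_1$ with $B_0=C_0$, so I may take $b_0$ to be the $\partial_1$-image of the standard basis of that block, together with the obvious lifts. With this choice the part of $\partial_2$ surviving in the alternating product is precisely $A_j|_{t=1}$, and unwinding $\prod_{i}[(b_i\cup\tilde{b}_{i-1})/c_i]^{(-1)^{i+1}}$ gives
$$\tau\big(C(W;V_\rho)\big) = \frac{\det\big(A_j|_{t=1}\big)}{\det\big(\tilde{\rho}(a_j)-I\big)} = \Delta_{L,\rho}(1).$$

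I expect the main obstacle to be bookkeeping rather than conceptual. One must verify that the presentation complex is genuinely simple homotopy equivalent to the complement, so that Reidemeister torsion transfers; and one must reconcile the various well-definedness ambiguities so that the two quantities agree on the nose. The indeterminacy of $\Delta_{L,\rho}(t)$ up to a factor $t^{2k}$ becomes harmless at $t=1$, but the sign and basis-change ambiguities built into the torsion, together with the choice of the splitting index $j$, must be tracked carefully to confirm equality rather than mere agreement up to a unit.
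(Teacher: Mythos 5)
The paper offers no proof of this statement: it is quoted from Johnson's unpublished notes \cite{Jo}, so the comparison is with the standard published argument, namely Kitano's \cite{Ki}, which proves the stronger identity that Wada's invariant equals the Reidemeister torsion twisted by $\rho\otimes f$ over $\BC(t)$, the present theorem being its specialization at $t=1$. Your outline reproduces exactly that specialized argument, and most of it is sound: the Wirtinger presentation $2$-complex is a spine of the knot exterior (alternatively, invoke Waldhausen's vanishing of the Whitehead group for Haken $3$-manifold groups to upgrade homotopy equivalence to simple homotopy equivalence); the $SL_2(\BC)$ setting kills the ambiguities you worry about at the end, since changing lifts multiplies bases by matrices of determinant $1$ and each cell contributes a pair of basis vectors, so reordering cells acts by even permutations; the indeterminacy $t^{2k}$ is trivial at $t=1$; and your final computation $\tau\big(C(W;V_\rho)\big)=\det\big(A_j|_{t=1}\big)/\det\big(\tilde{\rho}(a_j)-I\big)$ is the correct unwinding of the definition $\prod_i [(b_i\cup\tilde{b}_{i-1})/c_i]^{(-1)^{i+1}}$ for a length-two complex.

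The one genuine gap is your acyclicity claim. The hypothesis $\det(\rho(\mu)-I)\neq 0$ gives $H_0(W;V_\rho)=0$, but it does \emph{not} force $H_1$ and $H_2$ to vanish. Indeed, with the $j$-th block of $\partial_1$ invertible, $\ker\partial_1\cong V^{\,l-1}$ via the projection deleting the $j$-th block, and under this isomorphism $\partial_2$ becomes precisely $A_j|_{t=1}$; hence the complex is acyclic if and only if $\det\big(A_j|_{t=1}\big)\neq 0$, i.e.\ if and only if $\Delta_{L,\rho}(1)\neq 0$, which can fail even for irreducible $\rho$ with $\tr\rho(\mu)\neq 2$ (at jump loci of twisted homology). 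So your argument needs a case distinction: when $\det\big(A_j|_{t=1}\big)\neq 0$ the torsion computation goes through as you wrote; when $\det\big(A_j|_{t=1}\big)=0$ the complex is not acyclic, so $\tau_\rho(L)=0$ by the paper's convention, while $\Delta_{L,\rho}(1)=\det\big(A_j|_{t=1}\big)/\det\big(\rho(a_j)-I\big)=0$ as well --- the meridian hypothesis serving here only to make the evaluation of the denominator at $t=1$ nonzero, hence $\Delta_{L,\rho}(1)$ well defined. With that repair your proof is complete and coincides with the standard one.
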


\section{Proof of main results}

\label{app}

\subsection{Proof of Theorem \ref{main}}

Recall that $K=J(2m,2n)$ and $\pi_1(K) = \la a,b~|~w^na=bw^n \ra$, where $a,b$ are meridians and $w=(ba^{-1})^m(b^{-1}a)^m$. 

Let $r=w^naw^{-n}b^{-1}$. We have $\Delta_{K,\rho}(t)=\det \Phi \left( \frac{\partial r}{\partial a} \right) \big/ \det \Phi(b-1)$. It is easy to see that $\det \Phi(b-1) = t^2-t(s+s^{-1})+1 = t^2-tx+1$.

For an integer $k$ and a word $u$ (in 2 letters $a,b$), let $\delta_k(u)=1+u+\cdots+u^{k}$. The following lemma follows from direct calculations.

\begin{lemma} \label{r/a}
One has 
$$\frac{\partial r}{\partial a} = w^n \left( 1+(1-a)\delta_{n-1}(w^{-1}) w^{-1} \frac{\partial w}{\partial a}\right)$$
where
$$w^{-1} \frac{\partial w}{\partial a} = (a^{-1}b)^m \big( \delta_{m-1}(b^{-1}a) b^{-1} - \delta_{m-1}(ab^{-1}) \big).$$
\end{lemma}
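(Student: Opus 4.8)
The plan is to derive both identities directly from Fox's free calculus. I will only need the product rule $\frac{\partial (uv)}{\partial a} = \frac{\partial u}{\partial a} + u \frac{\partial v}{\partial a}$, the base values $\frac{\partial a}{\partial a} = 1$, $\frac{\partial b}{\partial a} = 0$, $\frac{\partial a^{-1}}{\partial a} = -a^{-1}$, together with the two standard power rules $\frac{\partial w^n}{\partial a} = \delta_{n-1}(w) \frac{\partial w}{\partial a}$ and $\frac{\partial w^{-n}}{\partial a} = -w^{-1} \delta_{n-1}(w^{-1}) \frac{\partial w}{\partial a}$, valid for $n \geq 1$. Throughout, the computation takes place in the group ring $\BZ[\pi_1(K)]$, so I must keep careful track of the order of left and right multiplication and use commutativity only among powers of a single element.

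For the first identity, I would apply the product rule to $r = w^n a w^{-n} b^{-1}$. Because $\frac{\partial b^{-1}}{\partial a} = 0$, all terms involving $b^{-1}$ drop out and the expression collapses to $\frac{\partial r}{\partial a} = \frac{\partial w^n}{\partial a} + w^n \big( 1 + a \frac{\partial w^{-n}}{\partial a} \big)$. Substituting the two power rules turns this into $\delta_{n-1}(w) \frac{\partial w}{\partial a} + w^n - w^n a\, w^{-1} \delta_{n-1}(w^{-1}) \frac{\partial w}{\partial a}$. The one nontrivial point is to recognize the first summand as the ``$+1$'' part of the claimed formula: since $w^{-1}$ commutes with $\delta_{n-1}(w^{-1})$, the telescoping identity $w^n \delta_{n-1}(w^{-1}) w^{-1} = 1 + w + \cdots + w^{n-1} = \delta_{n-1}(w)$ holds in $\BZ[\pi_1(K)]$. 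Rewriting $\delta_{n-1}(w) \frac{\partial w}{\partial a}$ as $w^n \delta_{n-1}(w^{-1}) w^{-1} \frac{\partial w}{\partial a}$ lets me factor out $w^n$ on the left, and the remaining two terms assemble into the factor $1 + (1-a) \delta_{n-1}(w^{-1}) w^{-1} \frac{\partial w}{\partial a}$, which is exactly the stated formula.

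For the second identity, I would set $u = ba^{-1}$ and $v = b^{-1}a$, so that $w = u^m v^m$ and $w^{-1} = (a^{-1}b)^m (ab^{-1})^m$. The base values give $\frac{\partial u}{\partial a} = -ba^{-1} = -u$ and $\frac{\partial v}{\partial a} = b^{-1}$, whence the product and power rules yield $\frac{\partial w}{\partial a} = -\delta_{m-1}(u)\, u + u^m \delta_{m-1}(v)\, b^{-1}$. Left-multiplying by $w^{-1}$ and simplifying relies on two facts: first, $(ab^{-1})^m (ba^{-1})^m = 1$ because $ab^{-1}$ and $ba^{-1}$ are mutually inverse, which clears the $u^m$ in the second summand; and second, the telescoping identity $(ab^{-1})^m \delta_{m-1}(ba^{-1})(ba^{-1}) = \delta_{m-1}(ab^{-1})$, which handles the first summand. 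After these cancellations the whole expression reduces to $(a^{-1}b)^m \big( \delta_{m-1}(b^{-1}a) b^{-1} - \delta_{m-1}(ab^{-1}) \big)$, as claimed.

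The main obstacle is not conceptual but bookkeeping: since we work in a noncommutative ring, the order of multiplication must be preserved at every step, and the simplifications only go through because of the two telescoping identities together with the inverse relations between $ba^{-1}$ and $ab^{-1}$ (and between $b^{-1}a$ and $a^{-1}b$). Once these are isolated, each formula collapses to a single $\delta$-expression with no further input.
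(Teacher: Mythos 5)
Your proof is correct and is precisely the ``direct calculation'' the paper invokes without writing out: the paper offers no details for this lemma, and your Fox-calculus computation, including the power rules and the two telescoping identities $w^n\delta_{n-1}(w^{-1})w^{-1}=\delta_{n-1}(w)$ and $(ab^{-1})^m\delta_{m-1}(ba^{-1})(ba^{-1})=\delta_{m-1}(ab^{-1})$, fills in exactly what was omitted. All the noncommutative bookkeeping checks out, so there is nothing to correct.
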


Let 
\begin{eqnarray*}
\Omega_1 &=&\rho \big( \delta_{n-1}(w^{-1} )(a^{-1}b)^m \big),\\
\Omega_2 &=& \big\{t^{-1} \rho \big( \delta_{m-1}(b^{-1}a) b^{-1}\big) - \rho \big( \delta_{m-1}(ab^{-1}) \big) \big\} \big( I-t\rho(a) \big).
\end{eqnarray*}
Then by Lemma \ref{r/a} we have $$\det \Phi \left( \frac{\partial r}{\partial a} \right) = \det (I + \Omega_1 \Omega_2) = 1 + \tr (\Omega_1 \Omega_2) + \det (\Omega_1 \Omega_2).$$

\begin{lemma} \label{O1}
One has $$\Omega_1 = \left[ \begin{array}{cc}
\beta P_{n-1}(z) -  \gamma P_{n-2}(z) & - S_{m-1}(y) \big( s^{-1} P_{n-1}(z) - s P_{n-2}(z) \big) \\
(2-y) S_{m-1}(y) \big( s P_{n-1}(z) - s^{-1} P_{n-2}(z) \big) & \gamma P_{n-1}(z) - \beta P_{n-2}(z) \end{array} \right]$$
where $\beta = S_m(y) - S_{m-1}(y)$ and $\gamma = S_m(y) - (y-1)S_{m-1}(y)$.
\end{lemma}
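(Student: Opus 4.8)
The plan is to avoid a head-on matrix product by first collapsing the geometric sum $\rho(\delta_{n-1}(w^{-1}))=\sum_{i=0}^{n-1}\rho(w^{-1})^i$ and then exploiting a cancellation at the level of words. First I would note that $\tr\rho(w^{-1})=\tr\rho(w)=z$, and rewrite \eqref{sum-power} for any $V\in SL_2(\BC)$ with $\tr V=v$ in the form $\sum_{i=0}^{k}V^i=P_k(v)\,I-P_{k-1}(v)\,V^{-1}$ (a direct repackaging of its entries, using that $V^{-1}$ is obtained from $V$ by the usual $SL_2$ rule). Applying this with $V=\rho(w^{-1})$, so that $V^{-1}=\rho(w)$ and $v=z$, yields
$$\rho(\delta_{n-1}(w^{-1}))=P_{n-1}(z)\,I-P_{n-2}(z)\,\rho(w).$$

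The key step is then a word-level cancellation. Since $a^{-1}b=(b^{-1}a)^{-1}$, we have $(b^{-1}a)^m(a^{-1}b)^m=1$, and therefore
$$w(a^{-1}b)^m=(ba^{-1})^m(b^{-1}a)^m(a^{-1}b)^m=(ba^{-1})^m.$$
Multiplying the previous display on the right by $\rho((a^{-1}b)^m)$ and using this identity gives
$$\Omega_1=P_{n-1}(z)\,\rho((a^{-1}b)^m)-P_{n-2}(z)\,\rho((ba^{-1})^m).$$

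It remains only to insert the two matrices, both already available from Section \ref{nab}. The matrix $\rho((ba^{-1})^m)$ was computed there explicitly, and $\rho((a^{-1}b)^m)=\rho((b^{-1}a)^m)^{-1}$ is the $SL_2(\BC)$-inverse of the matrix $\rho((b^{-1}a)^m)$ displayed there. Writing $\beta=S_m(y)-S_{m-1}(y)$ and $\gamma=S_m(y)-(y-1)S_{m-1}(y)$, these read
$$\rho((ba^{-1})^m)=\left[ \begin{array}{cc} \gamma & -sS_{m-1}(y) \\ s^{-1}(2-y)S_{m-1}(y) & \beta \end{array} \right]$$
and
$$\rho((a^{-1}b)^m)=\left[ \begin{array}{cc} \beta & -s^{-1}S_{m-1}(y) \\ s(2-y)S_{m-1}(y) & \gamma \end{array} \right].$$
Substituting these and reading off the four entries of $P_{n-1}(z)\,\rho((a^{-1}b)^m)-P_{n-2}(z)\,\rho((ba^{-1})^m)$ produces precisely the claimed matrix, with no further simplification required.

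I expect the only genuine obstacle to be recognizing the two shortcuts above: the rewriting of the power sum as $P_{n-1}(z)\,I-P_{n-2}(z)\,\rho(w)$, and the telescoping $w(a^{-1}b)^m=(ba^{-1})^m$. Without them one is forced to multiply the length-$(n-1)$ power sum directly against $\rho((a^{-1}b)^m)$, and the entries of that product collapse to $\beta$, $\gamma$ and the stated off-diagonal terms only after repeated use of the Chebyshev identity of Lemma \ref{chev}; the approach above sidesteps that computation entirely.
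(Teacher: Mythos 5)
Your proof is correct, but it takes a genuinely different route from the paper's. The paper proceeds by brute force: it writes out the matrices $\rho((a^{-1}b)^m)$ and $\rho(\delta_{n-1}(w^{-1}))$ (the latter via \eqref{sum-power}, with entries involving $W_{11}, W_{12}, W_{22}$ and $P_{n-1}(z), P_{n-2}(z)$) and multiplies them, leaving as ``a direct calculation'' the entrywise simplification --- which does require the quadratic relation of Lemma \ref{chev} to collapse products of the $W_{ij}$ with $S_m(y), S_{m-1}(y)$ into the stated $\beta$, $\gamma$ combinations. You instead make two structural observations that the paper does not: the repackaging $\sum_{i=0}^{k}V^i=P_k(v)I-P_{k-1}(v)V^{-1}$, which is indeed just \eqref{sum-power} read through the $SL_2$ inverse rule and is legitimately applied to $V=\rho(w^{-1})$ since $\tr\rho(w^{-1})=\tr\rho(w)=z$; and the word-level telescoping $w(a^{-1}b)^m=(ba^{-1})^m$, which holds because $(b^{-1}a)^m(a^{-1}b)^m=1$. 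These reduce $\Omega_1$ to $P_{n-1}(z)\,\rho((a^{-1}b)^m)-P_{n-2}(z)\,\rho((ba^{-1})^m)$, and the two matrices you substitute agree with those in Section \ref{nab} (your $\rho((a^{-1}b)^m)$, obtained as the inverse of $\rho((b^{-1}a)^m)$, coincides with the matrix the paper displays at the start of its own proof); all four entries then match the statement, as I checked. What your route buys is that the $W_{ij}$ and Lemma \ref{chev} never enter at all: the linearity of the power sum in $\rho(w)$ plus the group-theoretic cancellation do the entire computation, making the lemma essentially verification-free, whereas the paper's route reuses its earlier matrix computations at the cost of a genuinely nontrivial ``direct calculation.''
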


\begin{proof} 
By Lemma \ref{formulas} we have $$(a^{-1}b)^m = \left[ \begin{array}{cc}
S_{m}(y) -  S_{m-1}(y) & -s^{-1} S_{m-1}(y) \\
-s(y-2) S_{m-1}(y) & S_{m}(y) - (y-1) S_{m-1}(y) \end{array} \right]$$
and $$\delta_{n-1}(w^{-1}) = \left[ \begin{array}{cc}
P_{n-1}(z) -  W_{11} \, P_{n-2}(z) & - W_{12} \, P_{n-2}(z) \\
(y-2)W_{12} \, P_{n-2}(z) & P_{n-1}(z) -  W_{22} \, P_{n-2}(z) \end{array} \right].$$
The lemma then follows by a direct calculation.
\end{proof}

\begin{lemma} \label{O2}
One has \footnotesize
$$\Omega_2 = \left[ \begin{array}{cc}
(st+s^{-1}t^{-1}-2) \big( P_{m-1}(y) -  P_{m-2}(y) \big) & (t-s^{-1})P_{m-1}(y)+(t^{-1}-s) P_{m-2}(y)\\
(2-y)(st-1) \big( t^{-1}P_{m-1}(y) - s^{-1}  P_{m-2}(y) \big) & (s^{-1}t+st^{-1}-y) \big( P_{m-1}(y) - P_{m-2}(y) \big) \end{array} \right].$$
\normalsize
Moreover
$$\det \Omega_2=(t+t^{-1}-x)^2 \left( \frac{S_{m}(y)-S_{m-2}(y)-2}{y-2} \right).$$
\end{lemma}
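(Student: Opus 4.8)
The plan is to establish both assertions by direct matrix computation, organizing the work so that the entrywise algebra is separated from a clean structural argument for the determinant.

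First I would compute the two group elements entering $\Omega_2$. Since $\rho(b^{-1}a) = \left[\begin{smallmatrix} 1 & s^{-1} \\ s(y-2) & y-1\end{smallmatrix}\right]$ and $\rho(ab^{-1}) = \left[\begin{smallmatrix} y-1 & s \\ s^{-1}(y-2) & 1\end{smallmatrix}\right]$ both have trace $y$, formula \eqref{sum-power} (with $v=y$ and $k=m-1$) gives closed forms for $\rho(\delta_{m-1}(b^{-1}a))$ and $\rho(\delta_{m-1}(ab^{-1}))$ in terms of $P_{m-1}(y)$ and $P_{m-2}(y)$. Right-multiplying the first by $\rho(b^{-1}) = \left[\begin{smallmatrix} s^{-1} & 0 \\ y-2 & s\end{smallmatrix}\right]$ yields $\rho(\delta_{m-1}(b^{-1}a)b^{-1})$. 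I would then form the bracket $t^{-1}\rho(\delta_{m-1}(b^{-1}a)b^{-1}) - \rho(\delta_{m-1}(ab^{-1}))$ and multiply it on the right by $I - t\rho(a) = \left[\begin{smallmatrix} 1-st & -t \\ 0 & 1-s^{-1}t\end{smallmatrix}\right]$. The $(1,1)$ and $(2,2)$ entries collapse immediately once one expands products such as $(s^{-1}t^{-1}-1)(1-st) = st + s^{-1}t^{-1} - 2$; the $(1,2)$ entry requires slightly more care, since the coefficient of $P_{m-2}(y)$ only simplifies to $t^{-1}-s$ after invoking $x = s + s^{-1}$.

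For the determinant I would avoid expanding the $2\times 2$ determinant of the displayed matrix directly, and instead exploit the factorization $\Omega_2 = MN$ with $M = t^{-1}\rho(\delta_{m-1}(b^{-1}a)b^{-1}) - \rho(\delta_{m-1}(ab^{-1}))$ and $N = I - t\rho(a)$. The key observation is the conjugacy relation $(b^{-1}a)^i = b^{-1}(ab^{-1})^i b$, which gives $\delta_{m-1}(b^{-1}a)b^{-1} = b^{-1}\delta_{m-1}(ab^{-1})$, whence $M = \big(t^{-1}\rho(b^{-1}) - I\big)\rho(\delta_{m-1}(ab^{-1}))$. Therefore $\det\Omega_2 = \det\big(t^{-1}\rho(b^{-1}) - I\big)\cdot\det\rho(\delta_{m-1}(ab^{-1}))\cdot\det(I - t\rho(a))$. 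The outer two factors are elementary: $\det\big(t^{-1}\rho(b^{-1}) - I\big) = t^{-2} - xt^{-1} + 1 = t^{-1}(t+t^{-1}-x)$ and $\det(I - t\rho(a)) = t^2 - xt + 1 = t(t+t^{-1}-x)$, while the middle factor equals $\frac{S_m(y) - S_{m-2}(y) - 2}{y-2}$ by \eqref{det-sum} (with $k=m-1$, $v=y$). Multiplying the three factors, the powers of $t$ cancel and we obtain $(t+t^{-1}-x)^2\,\frac{S_m(y)-S_{m-2}(y)-2}{y-2}$, as claimed.

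I expect the main obstacle to be the bookkeeping in the first part: the entrywise simplification of $MN$ is lengthy and must be carried out carefully, with the substitution $x = s+s^{-1}$ applied at the right moment in each entry. The determinant, by contrast, is short once the conjugacy factorization of $M$ is noticed; this is the only genuinely non-mechanical step, and it reduces the computation to three one-line determinant evaluations.
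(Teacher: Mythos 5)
Your proposal is correct, and it splits naturally into a part that matches the paper and a part that genuinely improves on it. For the matrix form of $\Omega_2$ you do exactly what the paper does: apply \eqref{sum-power} of Lemma \ref{formulas} to $\rho(b^{-1}a)$ and $\rho(ab^{-1})$ (both of trace $y$) to get $\rho(\delta_{m-1}(b^{-1}a))$ and $\rho(\delta_{m-1}(ab^{-1}))$ in terms of $P_{m-1}(y), P_{m-2}(y)$, then multiply out; I checked the entrywise algebra and it reproduces the displayed matrix. (One small inaccuracy in your narrative: the $(1,2)$ entry does not require invoking $x=s+s^{-1}$ --- the coefficient of $P_{m-2}(y)$ is $t^{-1}-t+s^{-1}-s^{-1}+t-s=t^{-1}-s$ by direct cancellation, and indeed $x$ never appears in the matrix entries, only in the determinant.) Where you depart from the paper is the determinant: the paper expands the $2\times 2$ determinant of the displayed matrix head-on and simplifies using $P_k(y)=\frac{S_{k+1}(y)-S_k(y)-1}{y-2}$ from Lemma \ref{P_k}, a somewhat messy Chebyshev manipulation, whereas you use the conjugation identity $(b^{-1}a)^i=b^{-1}(ab^{-1})^ib$ to get $\delta_{m-1}(b^{-1}a)b^{-1}=b^{-1}\delta_{m-1}(ab^{-1})$ and hence the factorization $\Omega_2=\big(t^{-1}\rho(b^{-1})-I\big)\rho\big(\delta_{m-1}(ab^{-1})\big)\big(I-t\rho(a)\big)$, after which multiplicativity of $\det$ plus \eqref{det-sum} (with $k=m-1$) finishes in three lines, the outer factors contributing $t^{-1}(t+t^{-1}-x)$ and $t(t+t^{-1}-x)$. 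Your factorization is the better argument: it explains structurally why the factor $(t+t^{-1}-x)^2$ splits off (each outer factor is a twisted ``meridian minus identity'' determinant) and why the remaining factor is exactly the $\det$-of-a-geometric-sum expression \eqref{det-sum}, whereas the paper's expansion verifies the identity without illuminating it; the only cost is noticing the conjugacy relation, which is also reusable elsewhere in such computations.
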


\begin{proof} By Lemma \ref{formulas} we have
\begin{eqnarray*}
\rho \big( \delta_{m-1}(ab^{-1}) \big) &=& \left[ \begin{array}{cc}
P_{m-1}(y) -  P_{m-2}(y) & s P_{m-2}(y)\\
s^{-1}(y-2) P_{m-2}(y) & P_{m-1}(y) - (y-1) P_{m-2}(y) \end{array} \right],\\
\rho \big( \delta_{m-1}(b^{-1}a) \big) &=& \left[ \begin{array}{cc}
P_{m-1}(y) - (y-1) P_{m-2}(y) & s^{-1} P_{m-2}(y)\\
s(y-2) P_{m-2}(y) & P_{m-1}(y) -  P_{m-2}(y) \end{array} \right].
\end{eqnarray*}
The formula for $\Omega_2$ then follows by a direct calculation. The one for $\det \Omega_2$ is obtained by using the formula $P_k(y) = \frac{S_{k+1}(y)-S_{k}(y)-1}{y-2}$ in Lemma \ref{P_k}.
\end{proof}

We now complete the proof of Theorem \ref{main} by computing the determinant and the trace of the matrix $\Omega_{1}\Omega_2$. By Lemma \ref{formulas} we have $\det\Omega_1 = \frac{S_{n}(z)-S_{n-2}(z)-2}{z-2}$. Hence
\begin{equation} \label{det}
\det(\Omega_1\Omega_2)=(t+t^{-1}-x)^2 \left( \frac{S_{n}(z)-S_{n-2}(z)-2}{z-2} \right)\left( \frac{S_{m}(y)-S_{m-2}(y)-2}{y-2} \right).\end{equation}

By a direct calcultion, using the matrix forms of $\Omega_1$ and $\Omega_2$ in Lemmas \ref{O1} and \ref{O2} and the formula $P_k(y) = \frac{S_{k+1}(y)-S_{k}(y)-1}{y-2}$, we have
\begin{eqnarray*}
\tr (\Omega_1\Omega_2) &=& \big\{ (t+t^{-1})x - x^2 +(x^2-2-y) \big( S_m(y) - (y-1)S_{m-1}(y) \big)\big\} \\
&& \times \, S_{m-1}(y) ( P_{n-1}(z) - P_{n-2}(z))+ (2-y)(x^2-2-y) S^2_{m-1}(y) P_{n-2}(z)\\
 &=& \big\{ (t+t^{-1})x - x^2 +(x^2-2-y)\big( S_m(y) - (y-1)S_{m-1}(y) \big)\big\} \\
 && \times \, S_{m-1}(y) S_{n-1}(z)+(z-2) P_{n-2}(z)\\
  &=& \big\{ (t+t^{-1})x - x^2 +(x^2-2-y\big( S_m(y) - (y-1)S_{m-1}(y) \big)\big\}\\
  && \times \, S_{m-1}(y) S_{n-1}(z)+S_{n-1}(z)-S_{n-2}(z)-1.
\end{eqnarray*}
Since $S_{n-2}(z)= \big\{ 1 - (y+2-x^2) S_{m-1}(y) (S_{m-1}(y)- S_{m-2}(y)) \big\} S_{n-1}(z)$ we get
\begin{equation} \label{trace}
\tr (\Omega_1\Omega_2) = \big( (t+t^{-1})x - x^2 \big) S_{m-1}(y) S_{n-1}(z)-1.
\end{equation}

Finally, by combining the equations \eqref{det}, \eqref{trace} and $\Delta_{K,\rho}(t) = \frac{1+\tr (\Omega_1\Omega_2)+\det (\Omega_1\Omega_2)}{t^2-tx+1}$ we complete the proof of Theorem \ref{main}.

\subsection{Proof of Theorem \ref{main1}} Note that $\det \big( \rho(b)-I \big)=2-x$. Since $\tau_{\rho}(K)= \Delta_{K,\rho}(1)$ for $x \not= 2$, Theorem \ref{main1} follows directly from Theorem \ref{main}.

\subsection{Proof of Theorem \ref{main2}} Let $M$ be the 3-manifold obtained by a $\frac{p}{q}$-surgery on the genus one two-bridge knot $K=J(2m,2n)$. Suppose $\rho: \pi_1(K) \to SL_2(\BC)$ is a nonabelian representation which extends to a representation $\rho: \pi_1(M) \to SL_2(\BC)$. Recall that $\lambda$ is the canonical longitude corresponding to the meridian $\mu=a$. If $\tr \rho(\lambda) \not= 2$, then by \cite{Ki2015} (see also \cite{Ki1994-fibered, Ki1994-8}) the Reidemeister torsion of $M$ is given by   
\begin{equation}
\label{Dehn}
\tau_{\rho}(M) = \frac{\tau_{\rho}(K)}{2-\tr \rho(\lambda)}.
\end{equation}

By Theorem \ref{main1} we have $$\tau_{\rho}(K)=
(2-x) \left( \frac{S_{m}(y)-S_{m-2}(y)-2}{y-2} \right) \left( \frac{S_{n}(z)-S_{n-2}(z)-2}{z-2} \right) + xS_{m-1}(y) S_{n-1}(z)$$ if $x \not=2$. By Proposition \ref{longitude} we have $$\tr \rho(\lambda) - 2 = - \frac{x^2 (y-2)^2 S^2_{m-1}(y)}{4-x^2 + (y+2-x^2)(y-2)S^2_{m-1}(y)}.$$ By Lemma \ref{S^2} we have $S_{m-1}(y) \not=0$. This implies that $\tr \rho(\lambda) \not= 2$ if and only if $x \not= 0$. Theorem \ref{main2} then follows from \eqref{Dehn}.

\end{document}